\def\T{\mathcal{T}}
\def\P{\mathcal{P}}
\def\I{\mathcal{I}}
\def\C{\mathscr{C}}
\def\E{\mathbb{E}}
\def\s{\mathfrak{s}}
\def\op{^\mathrm{op}}
\def\Ab{\mathsf{Ab}}
\def\del{\delta}
\def\dr{\ar@{->}[r]}
\def\X{\mathscr{X}}
\def\H{\mathscr{H}}
\def\add{\mathsf{add}\hspace{.01in}}
\newcommand{\CC}{{\bf{C}}^{n+2}_{\C}}
\newcommand{\mr}{\hbox{\boldmath$\cdot$}}
\newcommand{\ov}{\overset}
\newcommand{\lra}{\longrightarrow}
\newcommand{\co}{\colon}
\newcommand{\uas}{^{\ast}}            
\newcommand{\sas}{_{\ast}}
\newcommand{\Xd}{\langle X^{\mr},\del\rangle}  
\newcommand{\Yr}{\langle Y^{\mr},\rho\rangle}  
\newcommand{\ush}{^\sharp}           
\newcommand{\ssh}{_\sharp}
\def\mod{\mathsf{mod}\hspace{.01in}}
\begin{document}
\baselineskip=15pt
\title{\Large{\bf Two new classes of $\bm{n}$-exangulated categories\footnotetext{\hspace{-1em}Jiangsheng Hu was supported by the NSF of China (Grant Nos. 11671069 and 11771212),  Qing Lan Project of Jiangsu Province and Jiangsu Government Scholarship for Overseas Studies (JS-2019-328). Panyue Zhou was supported by the National Natural Science Foundation of China (Grant Nos. 11901190 and 11671221), the Hunan Provincial Natural Science Foundation of China (Grant No. 2018JJ3205) and the Scientific Research Fund of Hunan Provincial Education Department (Grant No. 19B239).}}}
\medskip
\author{Jiangsheng Hu, Dongdong Zhang and Panyue Zhou}

\date{}

\maketitle
\def\blue{\color{blue}}
\def\red{\color{red}}

\newtheorem{theorem}{Theorem}[section]
\newtheorem{lemma}[theorem]{Lemma}
\newtheorem{corollary}[theorem]{Corollary}
\newtheorem{proposition}[theorem]{Proposition}
\newtheorem{conjecture}{Conjecture}
\theoremstyle{definition}
\newtheorem{definition}[theorem]{Definition}
\newtheorem{question}[theorem]{Question}
\newtheorem{remark}[theorem]{Remark}
\newtheorem{remark*}[]{Remark}
\newtheorem{example}[theorem]{Example}
\newtheorem{example*}[]{Example}
\newtheorem{condition}[theorem]{Condition}
\newtheorem{condition*}[]{Condition}
\newtheorem{construction}[theorem]{Construction}
\newtheorem{construction*}[]{Construction}

\newtheorem{assumption}[theorem]{Assumption}
\newtheorem{assumption*}[]{Assumption}

\baselineskip=17pt
\parindent=0.5cm

\begin{abstract}
\baselineskip=16pt
Herschend-Liu-Nakaoka introduced the notion of $n$-exangulated
categories. It is not only a higher
dimensional analogue of extriangulated categories defined by Nakaoka-Palu,
but also gives a simultaneous generalization of $n$-exact categories and $(n+2)$-angulated categories.
Let $\C$ be an $n$-exangulated category and $\X$ a full subcategory of $\C$.
If $\X$ satisfies $\X\subseteq\P\cap\I$, then { we give a necessary and sufficient condition for the ideal quotient $\C/\X$ to be an $n$-exangulated category},
where $\P$ (resp. $\I$) is the full subcategory of projective (resp. injective) objects in $\C$.
In addition, we define the notion of $n$-proper class in $\C$. If $\xi$ is an  $n$-proper class in $\C$, then we
prove that $\C$ admits a new $n$-exangulated structure.
These two ways give $n$-exangulated categories which are neither
$n$-exact nor $(n+2)$-angulated in general. \\[0.5cm]
\textbf{Key words:} $n$-exangulated categories; $(n+2)$-angulated categories; $n$-exact categories.\\[0.2cm]
\textbf{ 2010 Mathematics Subject Classification:} 18E30; 18E10; 18G05.
\medskip
\end{abstract}

\pagestyle{myheadings}
\markboth{\rightline {\scriptsize J. Hu, D. Zhang, P. Zhou\hspace{2mm}}}
         {\leftline{\scriptsize  Two new classes of $n$-exangulated categories}}

\section{Introduction}

Higher-dimensional Auslander-Reiten theory was introduced by Iyama in \cite{I}, and it replaces short exact sequences as the basic building blocks for homological algebra, by the longer exact sequences. A typical setting is to consider $n$-cluster tilting subcategories of abelian categories (resp. exact categories), where $n$ is a positive integer. All short exact sequences in such a subcategory are split, but it has nice exact sequences with $n+2$ objects. This was recently formalized by Jasso \cite{J} in the theory of $n$-abelian categories (resp. $n$-exact categories). There exists also a derived version of the theory focusing on $n$-cluster tilting subcategories of triangulated categories as introduced by Geiss, Keller and Oppermann in the theory of $(n +2)$-angulated categories in \cite{GKO}. Setting $n=1$ recovers the notions of abelian, exact and triangulated categories. We refer to
\cite{BJT,BT,L,L2,LZ,ZW} for { more discussion} on this matter.

The class of extriangulated categories, recently introduced in \cite{NP}, not only contains exact categories and
extension-closed subcategories of triangulated categories as examples, but it is also
closed under taking some ideal quotients. This will help to
construct an extriangulated category which is neither exact nor triangulated,  see \cite[Proposition 3.30]{NP}, \cite[Example 4.14]{ZZ} and  \cite[Remark 3.3]{HZZ}. The data of such a category is a triplet $(\C,\E,\s)$, where $\C$ is an additive category, $\mathbb{E}: \C^{\rm op}\times \C \rightarrow {\rm Ab}$ is an additive bifunctor and $\mathfrak{s}$ assigns to each $\delta\in \mathbb{E}(C,A)$ a class of $3$-term sequences with end terms $A$ and $C$ such that certain axioms hold. Recently, Herschend-Liu-Nakaoka \cite{HLN} introduced an $n$-analogue of this notion called $n$-exangulated category. Such a category is a similar triplet $(\C,\E,\s)$, with the main distinction being that the $3$-term sequences mentioned above are replaced by $(n+2)$-term sequences. It should be noted that the case $n =1$ corresponds to extriangulated categories. As typical examples we have that $n$-exact and $(n+2)$-angulated categories are $n$-exangulated, see \cite[Propositions 4.5 and 4.34]{HLN}. However, there are some other examples of $n$-exangulated categories which are neither $n$-exact nor $(n+2)$-angulated, see \cite[Section 6]{HLN} and \cite[Remark 4.5]{LZ}.
The main purpose of this paper is to construct more classes of $n$-exangulated categories which are neither $n$-exact nor $(n+2)$-angulated.

We now outline the results of the paper. In Section \ref{section2}, we review some elementary definitions
and facts on $n$-exangulated categories.

In Section \ref{section3}, we assume that $(\C,\E,\s)$ is an $n$-exangulated category, and $\P$ (resp. $\I$) is the full subcategory of projective (resp. injective) objects in $(\C,\E,\s)$. If $\X$ is a full subcategory of $\C$ satisfying $\X\subseteq\P\cap\I$, then we give { a necessary and sufficient condition for the ideal quotient $\C/\X$ to be an $n$-exangulated category}, which allows us to construct a new class of $n$-exangulated categories which are neither $n$-exact nor $(n +2)$-angulated (see Theorem \ref{main} and Example \ref{example:3.4}).

In Section \ref{section4}, for a given $n$-exangulated category $(\C,\E,\s)$, we define a notion of an $n$-proper class of distinguished $n$-exangles, denoted by $\xi$.
If $(\C,\E,\s)$ is equipped with an $n$-proper class $\xi$ of distinguished $n$-exangles, then $(\C,\E,\s)$ admits a new $n$-exangulated structure (see Theorem \ref{thma}). It should be noted that the method here is different from the one used in \cite[Theorem 3.2]{HZZ} (see Remark \ref{remark:4.6}). This construction gives another new class of $n$-exangulated categories which are neither $n$-exact nor $(n +2)$-angulated (see Proposition \ref{proposition:4.8} and Example \ref{example:4.8}).

\section{Preliminaries}\label{section2}
Throughout this paper, { $\C$ is an additive category and $n$ is a positive integer. Suppose that $\C$ is equipped with an additive bifunctor $\E\colon\C\op\times\C\to\Ab$, where $\Ab$ is the category of abelian groups. Next we briefly recall some definitions and basic properties of $n$-exangulated categories from \cite{HLN}. We omit some
details here, but the reader can find them in \cite{HLN}.}

{ For any pair of objects $A,C\in\C$, an element $\del\in\E(C,A)$ is called an {\it $\E$-extension} or simply an {\it extension}. We also write such $\del$ as $\tensor[_A]{\del}{_C}$ when we indicate $A$ and $C$. The zero element $\tensor[_A]{0}{_C}=0\in\E(C,A)$ is called the {\it split $\E$-extension}. For any pair of $\E$-extensions $\tensor[_A]{\del}{_C}$ and $\tensor[_{A'}]{{{\delta}{'}}}{_{C'}}$, let $\delta\oplus \delta'\in\mathbb{E}(C\oplus C', A\oplus A')$ be the
element corresponding to $(\delta,0,0,{\delta}{'})$ through the natural isomorphism $\mathbb{E}(C\oplus C', A\oplus A')\simeq\mathbb{E}(C, A)\oplus\mathbb{E}(C, A')
\oplus\mathbb{E}(C', A)\oplus\mathbb{E}(C', A')$.

For any $a\in\C(A,A')$ and $c\in\C(C',C)$,  $\E(C,a)(\del)\in\E(C,A')\ \ \text{and}\ \ \E(c,A)(\del)\in\E(C',A)$ are simply denoted by $a_{\ast}\del$ and $c^{\ast}\del$, respectively.

Let $\tensor[_A]{\del}{_C}$ and $\tensor[_{A'}]{{{\delta}{'}}}{_{C'}}$ be any pair of $\E$-extensions. A {\it morphism} $(a,c)\colon\del\to{\delta}{'}$ of extensions is a pair of morphisms $a\in\C(A,A')$ and $c\in\C(C,C')$ in $\C$, satisfying the equality
$a_{\ast}\del=c^{\ast}{\delta}{'}$.}

\begin{definition}\cite[Definition 2.11]{HLN}
By { the} Yoneda lemma, any extension $\del\in\E(C,A)$ induces natural transformations
\[ \del\ssh\colon\C(-,C)\Rightarrow\E(-,A)\ \ \text{and}\ \ \del\ush\colon\C(A,-)\Rightarrow\E(C,-). \]
For any $X\in\C$, these $(\del\ssh)_X$ and $\del\ush_X$ are given as follows.
\begin{enumerate}
\item[\rm(1)] $(\del\ssh)_X\colon\C(X,C)\to\E(X,A)\ ;\ f\mapsto f\uas\del$.
\item[\rm (2)] $\del\ush_X\colon\C(A,X)\to\E(C,X)\ ;\ g\mapsto g\sas\delta$.
\end{enumerate}
\end{definition}

\begin{definition}\cite[Definition 2.7]{HLN}
Let $\bf{C}_{\C}$ be the category of complexes in $\C$. As its full subcategory, define $\CC$ to be the category of complexes in $\C$ whose components are zero in the degrees outside of $\{0,1,\ldots,n+1\}$. Namely, an object in $\CC$ is a complex $X^{\mr}=\{X^i,d_X^i\}$ of the form
\[ X^0\xrightarrow{d_X^0}X^1\xrightarrow{d_X^1}\cdots\xrightarrow{d_X^{n-1}}X^n\xrightarrow{d_X^n}X^{n+1}. \]
We write a morphism $f^{\mr}\co X^{\mr}\to Y^{\mr}$ { simply as} $f^{\mr}=(f^0,f^1,\ldots,f^{n+1})$, only indicating the terms of degrees $0,\ldots,n+1$.

{ We define the homotopy relation on the morphism sets in the usual way. Denote by $\mathbf{K}_{\C}^{n+2}$ the homotopy category, which is the quotient of $\CC$ by the ideal of null-homotopic morphisms.}
\end{definition}

\begin{definition}\cite[Definitions 2.9 and 2.13]{HLN}
 Let $\C,\E,n$ be as before. Define a category $\AE:=\AE^{n+2}_{(\C,\E)}$ as follows.
\begin{enumerate}
\item[\rm(1)]  A pair $\Xd$ is an object of the category $\AE$ with $X^{\mr}\in\CC$
and $\del\in\E(X^{n+1},X^0)$, called an {\it $\E$-attached
complex of length} $n+2$, if it satisfies
$$(d^0_X)_{\ast}\del=0~~\textrm{and}~~(d_X^n)^{\ast}\del=0.$$
We also denote it by
$$X^0\xrightarrow{d^0_X}X^1\xrightarrow{d^1_X}\cdots\xrightarrow{d^{n-2}_X}X^{n-1}
\xrightarrow{d^{n-1}_X}X^n\xrightarrow{d^n_X}X^{n+1}\overset{\delta}{\dashrightarrow}$$
\item[\rm (2)]  For such pairs $\Xd$ and $\langle Y^{\mr},\rho\rangle$, a morphism  $f^{\mr}\colon\Xd\to\langle Y^{\mr},\rho\rangle$ in $\AE$ is
defined to be a morphism in $\CC$ satisfying $(f^0)_{\ast}\del=(f^{n+1})^{\ast}\rho$.
{ We use the same composition and the identities as in $\CC$.}
\end{enumerate}

An {\it $n$-exangle} is a pair  $\Xd$ of $X^{\mr}\in\CC$ and $\delta\in\mathbb{E}(X^{n+1},X^0)$ which satisfies the following conditions.
\begin{enumerate}
\item[\rm (a)] The following sequence of functors $\C\op\to\Ab$ is exact.
$$
\C(-,X^0)\xLongrightarrow{\C(-,\ d_X^0)}\cdots\xLongrightarrow{\C(-,\ d_X^n)}\C(-,X^{n+1})\xLongrightarrow{~\del\ssh~}\E(-,X^0)
$$
\item[\rm (b)] The following sequence of functors $\C\to\Ab$ is exact.
$$
\C(X^{n+1},-)\xLongrightarrow{\C(d_X^n,\ -)}\cdots\xLongrightarrow{\C(d_X^0,\ -)}\C(X^0,-)\xLongrightarrow{~\del\ush~}\E(X^{n+1},-)
$$
\end{enumerate}
In particular any $n$-exangle is an object in $\AE$.
A {\it morphism of $n$-exangles} simply means a morphism in $\AE$. Thus $n$-exangles form a full subcategory of $\AE$.
\end{definition}

{ \begin{definition}\cite[Definition 2.17]{HLN}
Let $A,C\in{\C}$ be any pair of objects. The subcategory of $\CC$, denoted by $\mathbf{C}^{n+2}_{(\C;A,C)}$, or simply
by $\mathbf{C}^{n+2}_{(A,C)}$, is defined as follows.

\begin{enumerate}
\item[\rm(1)] An object $X^{\mr}\in{\CC}$ is in $\mathbf{C}^{n+2}_{(A,C)}$ if it satisfies $X^0=A$ and $X^{n+1}=C$. We also write it as $_{A}X^{\mr}_{C}$ when we emphasize $A$ and $C$.
\item[\rm(2)] For any $X^{\mr}, Y^{\mr}\in{\mathbf{C}^{n+2}_{(A,C)}}$, the morphism set is defined by
    $$\mathbf{C}^{n+2}_{(A,C)}(X^{\mr},Y^{\mr})=\{f^{\mr}\in{\CC(X^{\mr},Y^{\mr})}\ | \ f^{0}=1_{A}, \ f^{n+1}=1_{C}\}.$$
\end{enumerate}

We denote by $\mathbf{K}_{(A,C)}^{n+2}$ the quotient of $\mathbf{C}^{n+2}_{(A,C)}$
 by the same homotopy relation as $\mathbf{C}^{n+2}_{\C}$. The homotopy equivalence class of
$_{A}X^{\mr}_{C}$ is denoted by $[_{A}X^{\mr}_{C}]$, or simply by $[X^{\mr}]$.
\end{definition}}

{ \begin{remark}\cite[Remark 2.18]{HLN}
 Let $X^{\mr}, Y^{\mr}\in{\mathbf{C}^{n+2}_{(A,C)}}$ be any pair of objects. If a morphism $f^{\mr}\in\mathbf{C}^{n+2}_{(A,C)}(X^{\mr},Y^{\mr})$ gives a homotopy equivalence in $\mathbf{C}^{n+2}_{\C}$, then it is a homotopy equivalence in $\mathbf{C}^{n+2}_{(A,C)}$. However, the converse is not true in general. Thus there can be a difference between homotopy equivalences taken in ${\mathbf{C}^{n+2}_{(A,C)}}$ and $\mathbf{C}^{n+2}_{\C}$.
\end{remark}}

\begin{definition}\cite[Definition 2.22]{HLN}\label{def1}
Let $\s$ be a correspondence which associates a homotopy equivalence class $\s(\del)=[{}_AX^{\mr}_C]$ to each extension $\del={ \tensor[_A]{\delta}{_C}}$. Such { an} $\s$ is called a {\it realization} of $\E$ if it satisfies the following condition for any $\s(\del)=[X^{\mr}]$ and any $\s(\rho)=[Y^{\mr}]$.
\begin{itemize}
\item[{\rm (R0)}] For any morphism of extensions $(a,c)\co\del\to\rho$, there exists a morphism $f^{\mr}\in\CC(X^{\mr},Y^{\mr})$ of the form $f^{\mr}=(a,f^1,\ldots,f^n,c)$. Such { an} $f^{\mr}$ is called a {\it lift} of $(a,c)$.
\end{itemize}
In such a case, we { simply} say that \lq\lq$X^{\mr}$ realizes $\del$" whenever they satisfy $\s(\del)=[X^{\mr}]$.

Moreover, a realization $\s$ of $\E$ is said to be {\it exact} if it satisfies the following conditions.
\begin{itemize}
\item[{\rm (R1)}] For any $\s(\del)=[X^{\mr}]$, the pair $\Xd$ is an $n$-exangle.
\item[{\rm (R2)}] For any $A\in\C$, the zero element ${ \tensor[_A]{0}{_0}}=0\in\E(0,A)$ satisfies
\[ \s({ \tensor[_A]{0}{_0}})=[A\ov{1_A}{\lra}A\to0\to\cdots\to0\to0]. \]
Dually, $\s({ \tensor[_0]{0}{_A}})=[0\to0\to\cdots\to0\to A\ov{1_A}{\lra}A]$ holds for any $A\in\C$.
\end{itemize}
Note that the above condition {\rm (R1)} does not depend on representatives of the class $[X^{\mr}]$.
\end{definition}

\begin{definition}\cite[Definition 2.23]{HLN}
Let $\s$ be an exact realization of $\E$.
\begin{enumerate}
\item[\rm (1)] An $n$-exangle $\Xd$ is called an $\s$-{\it distinguished} $n$-exangle if it satisfies $\s(\del)=[X^{\mr}]$. We often simply say {\it distinguished $n$-exangle} when $\s$ is clear from the context.
\item[\rm (2)]  An object $X^{\mr}\in\CC$ is called an {\it $\s$-conflation} or simply a {\it conflation} if it realizes some extension $\del\in\E(X^{n+1},X^0)$.
\item[\rm (3)]  A morphism $f$ in $\C$ is called an {\it $\s$-inflation} or simply an {\it inflation} if it admits some conflation $X^{\mr}\in\CC$ satisfying $d_X^0=f$.
\item[\rm (4)]  A morphism $g$ in $\C$ is called an {\it $\s$-deflation} or simply a {\it deflation} if it admits some conflation $X^{\mr}\in\CC$ satisfying $d_X^n=g$.
\end{enumerate}
\end{definition}

\begin{definition}\cite[Definition 2.27]{HLN}
For a morphism $f^{\mr}\in\CC(X^{\mr},Y^{\mr})$ satisfying $f^0=1_A$ for some $A=X^0=Y^0$, its {\it mapping cone} $M_f^{\mr}\in\CC$ is defined to be the complex
\[ X^1\xrightarrow{d_{M_f}^0}X^2\oplus Y^1\xrightarrow{d_{M_f}^1}X^3\oplus Y^2\xrightarrow{d_{M_f}^2}\cdots\xrightarrow{d_{M_f}^{n-1}}X^{n+1}\oplus Y^n\xrightarrow{d_{M_f}^n}Y^{n+1} \]
where $d_{M_f}^0=\begin{bmatrix}-d_X^1\\ f^1\end{bmatrix},$
$d_{M_f}^i=\begin{bmatrix}-d_X^{i+1}&0\\ f^{i+1}&d_Y^i\end{bmatrix}\ (1\le i\le n-1),$
$d_{M_f}^n=\begin{bmatrix}f^{n+1}&d_Y^n\end{bmatrix}$.

{\it The mapping cocone} is defined dually, for morphisms $h^{\mr}$ in $\CC$ satisfying $h^{n+1}=1$.
\end{definition}

\begin{definition}\label{definition:2.10}\cite[Definition 2.32]{HLN}
An {\it $n$-exangulated category} is a triplet $(\C,\E,\s)$ of additive category $\C$, additive bifunctor $\E\co\C\op\times\C\to\Ab$, and its exact realization $\s$, satisfying the following conditions.
\begin{itemize}[leftmargin=4em]
\item[{\rm (EA1)}] Let $A\ov{f}{\lra}B\ov{g}{\lra}C$ be any sequence of morphisms in $\C$. If both $f$ and $g$ are inflations, then so is $g f$. Dually, if $f$ and $g$ are deflations then so is $g f$.

\item[{\rm (EA2)}] For $\rho\in\E(D,A)$ and $c\in\C(C,D)$, let ${}_A\langle X^{\mr},c\uas\rho\rangle_C$ and ${}_A\Yr_D$ be distinguished $n$-exangles. Then $(1_A,c)$ has a {\it good lift} $f^{\mr}$, in the sense that its mapping cone gives a distinguished $n$-exangle $\langle M^{\mr}_f,(d_X^0)\sas\rho\rangle$.
\item[{\rm (EA2$\op$)}] Dual of {\rm (EA2)}.
\end{itemize}
Note that the case $n=1$, a triplet $(\C,\E,\s)$ is a  $1$-exangulated category if and only if it is an extriangulated category, see \cite[Proposition 4.3]{HLN}.
\end{definition}

\begin{example}
From \cite[Proposition 4.34]{HLN} and \cite[Proposition 4.5]{HLN},  we know that $n$-exact categories and $(n+2)$-angulated categories are $n$-exangulated categories.
There are some other examples of $n$-exangulated categories
 which are neither $n$-exact nor $(n+2)$-angulated, see \cite[Section 6]{HLN} for more details.
\end{example}

\begin{definition}\label{def2}\cite[Definition 3.14]{ZW}
Let $(\C,\E,\s)$ be an $n$-exangulated category.
An object $P\in\C$ is called \emph{projective} if, for any distinguished $n$-exangle
$$A^0\xrightarrow{\alpha_0}A^1\xrightarrow{\alpha_1}A^2\xrightarrow{\alpha_2}\cdots\xrightarrow{\alpha_{n-2}}A^{n-1}
\xrightarrow{\alpha_{n-1}}A^n\xrightarrow{\alpha_n}A^{n+1}\overset{\delta}{\dashrightarrow}$$
and any morphism $c$ in $\C(P,A^{n+1})$, there exists a morphism $b\in\C(P,A^n)$ satisfying $\alpha_n b=c$.
We denote the full subcategory of projective objects in $\C$ by $\P$.
Dually, the full subcategory of injective objects in $\C$ is denoted by $\I$.
\end{definition}

\begin{remark} \cite[Remark 3.3]{LZ}\label{remark:2.12}
~\begin{itemize}
 \item[\rm (1)]  When $n=1$, they agree with
the usual definitions \cite[Definition 3.23]{NP}.

\item[\rm (2)] If $(\C,\E,\s)$ is an $n$-exact category, then they agree with \cite[Definition 3.11]{J}.

\item[\rm (3)] If $(\C,\E,\s)$ is an $(n+2)$-angulated category, then
$\P=\I$ consists of zero objects.
\end{itemize}
\end{remark}

\begin{lemma}\label{lem1}
Let $(\C,\E,\s)$ be an $n$-exangulated category. Then
$\E(\C,\P)=0$ and $\E(\I,\C)=0$.
\end{lemma}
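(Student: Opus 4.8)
The plan is to establish both vanishings by one short computation together with a duality argument. The key observation is that any distinguished $n$-exangle $\langle X^{\mr},\del\rangle$ is, in particular, an $\E$-attached complex, so its defining extension automatically satisfies the two identities $(d_X^0)_{\ast}\del=0$ and $(d_X^n)\uas\del=0$. I would combine these with the defining lifting property of a projective (resp. injective) object placed at the appropriate end of the $n$-exangle. No appeal to the long exact sequences (a) and (b) is needed for this slick form of the argument.

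First I would treat the projective side. Fix $P\in\P$ and an arbitrary extension $\del$ whose realization $\s(\del)=[X^{\mr}]$ has $P$ as its terminal term $X^{n+1}$; by (R1) this class is represented by a distinguished $n$-exangle
\[ X^0\xrightarrow{d_X^0}X^1\to\cdots\to X^n\xrightarrow{d_X^n}X^{n+1}\overset{\del}{\dashrightarrow} \]
with $X^{n+1}=P$. Applying the definition of projectivity of $P$ to the identity morphism $1_P\in\C(P,X^{n+1})$ produces $b\in\C(P,X^n)$ with $d_X^n\,b=1_P$, i.e. $d_X^n$ is a split epimorphism. Using contravariant functoriality of $\E$ together with the attached-complex identity $(d_X^n)\uas\del=0$, I then compute
\[ \del=(1_P)\uas\del=(d_X^n\,b)\uas\del=b\uas\big((d_X^n)\uas\del\big)=0, \]
so every such extension vanishes.

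The injective case is strictly dual: for $I\in\I$ and an extension $\del$ whose realization has $I$ as its initial term $X^0$, applying injectivity of $I$ to $1_I$ produces $b\in\C(X^1,I)$ with $b\,d_X^0=1_I$, and the identity $(d_X^0)_{\ast}\del=0$ gives $\del=(1_I)_{\ast}\del=(b\,d_X^0)_{\ast}\del=b_{\ast}\big((d_X^0)_{\ast}\del\big)=0$. I do not anticipate a genuine obstacle here; this lemma is essentially a one-line consequence of the definitions. The only points demanding attention are to invoke (R1) so that the chosen representative really is an $\E$-attached complex—this is exactly what furnishes the two vanishing identities—and to apply the covariant and contravariant functoriality of $\E$ on the correct variable, so that $(d_X^n\,b)\uas=b\uas(d_X^n)\uas$ and $(b\,d_X^0)_{\ast}=b_{\ast}(d_X^0)_{\ast}$ are used on the appropriate side.
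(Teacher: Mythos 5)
Your argument is correct, and it takes a genuinely different route from the paper: the paper's entire proof of this lemma is a one-line citation to Lemma 3.4 of \cite{LZ} and its dual, whereas you give a self-contained argument. What you do --- apply projectivity of $P$ to $1_P\in\C(P,X^{n+1})$ to split $d_X^n$, then kill $\del$ using the attached-complex identity $(d_X^n)\uas\del=0$ together with $(d_X^n\,b)\uas=b\uas(d_X^n)\uas$ --- is essentially the proof of the relevant direction of that cited lemma, so nothing is lost; what your version buys is independence from \cite{LZ}, and your direct computation even bypasses the splitness criterion of \cite[Claim 2.15]{HLN} (``$\del=0$ iff $d_X^n$ is a retraction''), which is the other natural way to finish. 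Your appeal to the fact that a distinguished $n$-exangle is an $\E$-attached complex is legitimate, since the paper records exactly this (``in particular any $n$-exangle is an object in $\mathcal{AE}$''), and your applications of the functoriality of $\E$ are in the correct order on each side.

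One point deserves to be made explicit. What you prove is $\E(P,C)=0$ for $P\in\P$ in the \emph{first} (contravariant) slot, i.e.\ for extensions whose realization ends at $P$, and $\E(C,I)=0$ for $I\in\I$ in the \emph{second} (covariant) slot, i.e.\ for extensions whose realization begins at $I$. Under the paper's convention that $\del\in\E(C,A)$ is realized by a complex with $X^0=A$ and $X^{n+1}=C$, the displayed formulas ``$\E(\C,\P)=0$ and $\E(\I,\C)=0$'' read literally would assert the opposite pairing, which is false in general (already for $\mod\Lambda$ with $\E=\Ext^1$, extensions $\Ext^1(C,P)$ by a projective $P$ need not vanish). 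Your reading is the intended one: it matches \cite[Lemma 3.4]{LZ} and its dual, and it is exactly what the paper uses in the proof of Theorem \ref{main} (e.g.\ concluding $v\uas\del=0$ for $v\colon X\to C$ with $X\in\X\subseteq\P\cap\I$ requires vanishing of $\E(X,A)$ with $X$ in the contravariant slot). So you have, correctly and perhaps silently, repaired a slip in the ordering of the arguments in the statement rather than proved a different lemma.
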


\proof This follows from Lemma 3.4 and its dual in \cite{LZ}.  \qed

\section{Ideal quotients of $n$-exangulated categories}\label{section3}
Let $\X$ be a full subcategory of $\C$ { that is closed under isomorphisms
and finite direct sums.}
For two objects $A,B$ in { $\C$} denote by { $[\X](A,B)$} the subgroup of $\C(A,B)$ consisting of those morphisms which factor through an object in $\X$. Denote by $\C/\X$ the \emph{ideal quotient category} of $\C$ modulo $\X$: the objects are the same as the ones in $\C$, for two objects $A$ and $B$ the Hom space is given by the quotient group $\C(A,B)/{ [\X](A,B)}$.
Note that the ideal quotient category $\C/\X$ is an additive category.
We denote by $\overline{f}$ the image of $f\colon A\to B$ of $\C$ in $\C/\X$.
\medskip

Let $(\C, \mathbb{E}, \mathfrak{s})$ be an $n$-exangulated category and $\X$ a full subcategory of $\C$.
Assume that
$$A^0\xrightarrow{~\alpha_0~}A^1\xrightarrow{~\alpha_1~}A^2\xrightarrow{~\alpha_2~}\cdots\xrightarrow{~\alpha_{n-2}~}A^{n-1}
\xrightarrow{~\alpha_{n-1}~}A^n\xrightarrow{~\alpha_n~}A^{n+1}\overset{\delta}{\dashrightarrow}$$
is a distinguished $n$-exangle in $\C$. Denote $\overline{\C}:=\C/\X$.
{ This sequence
$$A^0\xrightarrow{~\overline{\alpha_0}~}A^1\xrightarrow{~\overline{\alpha_1}~}A^2\xrightarrow{~\overline{\alpha_2}~}\cdots
\xrightarrow{~\overline{\alpha_{n-2}}~}A^{n-1}
\xrightarrow{~\overline{\alpha_{n-1}}~}A^n\xrightarrow{~\overline{\alpha_n}~}A^{n+1}$$
is called \emph{weak kernel-cokernel sequence} if the following sequences
$$
\overline{\C}(-,A^0)\xLongrightarrow{\overline{\C}(-,\ \overline{\alpha_0})}\overline{\C}(-,A^1)\xLongrightarrow{\overline{\C}(-,\ \overline{\alpha_1})}\cdots\xLongrightarrow{\overline{\C}(-,\ \overline{\alpha_{n-1}})}\overline{\C}(-,A^n)\xLongrightarrow{\overline{\C}(-,\ \overline{\alpha_n})}\overline{\C}(-,A^{n+1})
$$
and
$$
\overline{\C}(A^{n+1},-)\xLongrightarrow{\overline{\C}(\overline{\alpha_n},\ -)}\overline{\C}(A^{n},-)\xLongrightarrow{\overline{\C}(\overline{\alpha_{n-1}},\ -)}\cdots\xLongrightarrow{\overline{\C}(\overline{\alpha_{1}},\ -)}\overline{\C}(A^1,-)\xLongrightarrow{\overline{\C}(\overline{\alpha_{0}},\ -)}\overline{\C}(A^0,-)
$$
are exact.}

The following construction gives $n$-exangulated categories which are { neither}
$n$-exact nor $(n+2)$-angulated in general.

\begin{theorem}\label{main}
Let $(\C, \mathbb{E}, \mathfrak{s})$ be an $n$-exangulated category and $\X$ a full subcategory of $\C$.
If $\X$ satisfies $\X\subseteq\P\cap\I$, then the ideal quotient $\C/\X$ is an $n$-exangulated category
{ if and only if any distinguished $n$-exangle in $\C$ induces a weak kernel-cokernel sequence in $\C/\X$.}
\end{theorem}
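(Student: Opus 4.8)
I would equip $\overline{\C}=\C/\X$ with the canonical induced data $(\overline{\E},\overline{\s})$, prove that every axiom of Definition \ref{definition:2.10} except (R1) holds automatically whenever $\X\subseteq\P\cap\I$, and then show that (R1) is equivalent to the weak kernel--cokernel condition. Thus ``$\C/\X$ is $n$-exangulated'' is understood with respect to this induced structure, and the equivalence reduces to a single axiom. First I would construct $\overline{\E}$. Since $\X\subseteq\P\cap\I$, Lemma \ref{lem1} gives $\E(C,X)=0$ and $\E(X,A)=0$ for all $X\in\X$; hence if $a$ factors through $\X$ then $a_{\ast}=0$ on $\E$ (it passes through some $\E(C,X)=0$), and if $c$ factors through $\X$ then $c\uas=0$ (it passes through some $\E(X,A)=0$). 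Therefore $\E$ descends to an additive bifunctor $\overline{\E}\colon\overline{\C}\op\times\overline{\C}\to\Ab$ with $\overline{\E}(C,A)=\E(C,A)$ on objects and $\overline{a}_{\ast}=a_{\ast}$, $\overline{c}\uas=c\uas$. I then define $\overline{\s}(\del)=[\overline{X}^{\mr}]$, the image of $\s(\del)=[X^{\mr}]$ under the additive quotient functor applied degreewise; this is well defined because that functor sends identities to identities, zero objects to zero objects, and homotopies to homotopies, so it descends to the homotopy categories and preserves the subcategories $\mathbf{C}^{n+2}_{(A,C)}$.

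The conceptual heart is the equivalence (R1)$\iff$weak kernel--cokernel. Fix a distinguished $n$-exangle $\Xd$ in $\C$ with terms $A^0,\dots,A^{n+1}$ and differentials $\alpha_i$, and consider its image $\langle\overline{X}^{\mr},\del\rangle$. Condition (R1) asks exactly that the two functorial sequences ending in $\overline{\del}\ssh$ and $\overline{\del}\ush$ be exact. The exactness at all inner terms $\overline{\C}(-,A^i)$ for $1\le i\le n$, together with its dual, is by definition the weak kernel--cokernel condition. The only remaining exactness, namely at $\overline{\C}(-,A^{n+1})$ against $\overline{\del}\ssh$ (and dually at $\overline{\C}(A^0,-)$ against $\overline{\del}\ush$), is automatic: if $c\uas\del=0$ in $\overline{\E}(W,A^0)=\E(W,A^0)$, the $n$-exangle property of $\Xd$ in $\C$ yields $c=\alpha_n b$, so $\overline{c}=\overline{\alpha_n}\,\overline{b}$; conversely $\overline{\del}\ssh(\overline{\alpha_n}\,\overline{b})=b\uas(\alpha_n\uas\del)=0$ since $\alpha_n\uas\del=(d_X^n)\uas\del=0$. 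Hence (R1) holds if and only if the weak kernel--cokernel condition holds.

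The remaining axioms hold unconditionally. For (R0), a morphism of extensions $(\overline{a},\overline{c})\colon\del\to\rho$ in $\overline{\E}$ satisfies $a_{\ast}\del=c\uas\rho$ already in $\E=\overline{\E}$, so a lift in $\CC$ furnished by (R0) in $\C$ maps to a lift in $\overline{\C}$. Condition (R2) follows since the quotient functor fixes identities and zero objects, so the trivial realizations map to trivial realizations. For (EA1), every $\overline{\s}$-conflation is the image of an $\s$-conflation, so two composable inflations in $\overline{\C}$ lift to inflations in $\C$; applying (EA1) in $\C$ and pushing the witnessing conflation forward gives the claim, and dually for deflations. For (EA2) and (EA2$\op$) the crucial observation is that the additive quotient functor commutes with the formation of mapping cones, i.e.\ $M^{\mr}_{\overline{f}}=\overline{M^{\mr}_f}$. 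Given the data of (EA2) in $\overline{\C}$, I would replace the input exangles by images of distinguished $n$-exangles in $\C$ (legitimate up to homotopy equivalence, since $\overline{\s}$ is by definition the image of $\s$ and the axiom is homotopy-invariant), apply (EA2) in $\C$ to obtain a good lift $f^{\mr}$, and push it forward: $\overline{f}^{\mr}$ is a good lift of $(1_A,\overline{c})$ whose mapping cone $\overline{M^{\mr}_f}$ realizes $(d_X^0)\sas\rho$ and is therefore distinguished.

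Assembling these, $(\overline{\C},\overline{\E},\overline{\s})$ is $n$-exangulated iff (R0), (R1), (R2), (EA1), (EA2), (EA2$\op$) all hold, iff (R1) holds (the unique non-automatic axiom), iff the weak kernel--cokernel condition holds, which gives both directions. The main obstacle I expect is the careful verification of (EA2)/(EA2$\op$): one must confirm that distinguished $n$-exangles in $\overline{\C}$ are represented by images of distinguished $n$-exangles in $\C$, that good lifts and mapping cones are preserved by the quotient functor, and that all of this is compatible with the homotopy relation defining $\overline{\s}$. The equivalence in the second paragraph, by contrast, is the clean conceptual core, since it isolates precisely the inner exactness as the content of the condition.
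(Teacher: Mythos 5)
Your proposal is correct and takes essentially the same route as the paper: the same induced structure $(\overline{\E},\overline{\s})$ built from Lemma \ref{lem1}, the same lifting-to-$\C$ arguments for (R0), (R2), (EA1), (EA2), (EA2$\op$), and the same reduction of (R1) to end-term exactness (which you both verify directly from the $n$-exangle property in $\C$) plus inner exactness (the weak kernel--cokernel hypothesis). Your explicit framing of (R1) as the unique non-automatic axiom is merely a cleaner packaging of what the paper does when it calls necessity trivial and, for sufficiency, observes that only the sequences ending in $\overline{\del\ssh}$ and $\overline{\del\ush}$ need to be checked.
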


\proof Put $\overline{\C}=\C/\X$. By Lemma \ref{lem1}, we have
$\E(\C,\P)=0$ and $\E(\I,\C)=0$. Thus one can define the additive
bifunctor
$\overline{\E}\colon \overline{\C}^{\rm op}\times\overline{\C}\to\Ab$
given by
\begin{itemize}
\item $\overline{\E}(C,A)=\E(C,A)$ for any $A,C\in\C$,

\item $\overline{\E}(\overline{c},\overline{a})=\E(c,a)$
for any $a\in\C(A,A'),~c\in\C(C,C')$, where $\overline{a}$ and $\overline{c}$
denote the images of $a$ and $c$ in $\C/\X$, respectively.
\end{itemize}

For any $\overline{\E}$-extension $\delta\in\overline{\E}(C,A)={ \E}(C,A)$, define
$$\overline{\s}(\delta)=\overline{\s(\delta)}=[A\xrightarrow{~\overline{\alpha_0}~}
B^1\xrightarrow{~\overline{\alpha_1}~}
B^2\xrightarrow{~\overline{\alpha_2}~}\cdots\xrightarrow{~\overline{\alpha_{n-1}}~}B^n\xrightarrow{~\overline{\alpha_{n}}~}C]$$
using $\s(\delta)=[A\xrightarrow{~\alpha_0~}
B^1\xrightarrow{~\alpha_1~}
B^2\xrightarrow{~\alpha_2~}\cdots\xrightarrow{~\alpha_{n-1}~}B^n\xrightarrow{\alpha_{n}}C]$.
\smallskip

{ {\bf Necessity.~} It is trivial.}

{ {\bf Sufficiency.~}} Now we prove that $\overline{\s}$ is an exact realization of $\overline{\E}$.

\smallskip
Let $(\overline{a},\overline{c})\colon { \tensor[_A]{\delta}{_C}}\to {\tensor[_{A'}]{{{\delta}{'}}}{_{C'}}}$ be any morphism of $\overline{\E}$-extensions.
By definition, this is equivalent to that $(a,c)\colon {\tensor[_A]{\delta}{_C}}\to {\tensor[_{A'}]{{{\delta}{'}}}{_{C'}}}$ is a morphism of $\E$-extensions.
Put
$$\overline{\s}(\delta)=[B^{\mr}]=[A\xrightarrow{~\overline{\alpha_0}~}
B^1\xrightarrow{~\overline{\alpha_1}~}
B^2\xrightarrow{~\overline{\alpha_2}~}\cdots\xrightarrow{~\overline{\alpha_{n-1}}~}B^n\xrightarrow{\overline{\alpha_{n}}~}C],$$
$$\overline{\s}(\delta')=[{D}^{\mr}]=[A'\xrightarrow{~\overline{\beta_0}~}
D^1\xrightarrow{~\overline{\beta_1}~}
D^2\xrightarrow{~\overline{\beta_2}~}\cdots\xrightarrow{~\overline{\beta_{n-1}}~}D^n\xrightarrow{\overline{\beta_{n}}~}C'].$$
Since the condition in Definition \ref{def1} does not depend on the
representatives of the { {homotopy}} equivalence class, we may
assume
$$\s(\delta)=[A\xrightarrow{~\alpha_0~}
B^1\xrightarrow{~\alpha_1~}
B^2\xrightarrow{~\alpha_2~}\cdots\xrightarrow{~\alpha_{n-1}~}B^n\xrightarrow{\alpha_{n}~}C],$$
$$\s(\delta')=[A'\xrightarrow{~\beta_0~}
D^1\xrightarrow{~\beta_1~}
D^2\xrightarrow{~\beta_2~}\cdots\xrightarrow{~\beta_{n-1}~}D^n\xrightarrow{\beta_{n}~}C'].$$
Since $\s$ is a realization of $\E$,
there exists a morphism $f^{\mr}\in\CC(B^{\mr},{D}^{\mr})$ of the form $f^{\mr}=(a,f^1,\ldots,f^n,c)$
such that $f^{\mr}$ is a lift of $(a,c)$.
Thus $\overline{f^{\mr}}$ is a lift of $(\overline{a},\overline{c})$.
So (R0) is satisfied.

{ Now we prove that (R1) is satisfied.

For any
$$\overline{\s}(\delta)=[X^{\mr}]=[A\xrightarrow{~\overline{\alpha_0}~}
B^1\xrightarrow{~\overline{\alpha_1}~}
B^2\xrightarrow{~\overline{\alpha_2}~}\cdots\xrightarrow{~\overline{\alpha_{n-1}}~}B^n\xrightarrow{~\overline{\alpha_{n}}~}C],$$
through the above discussion, we can assume
$$\s(\delta)=[A\xrightarrow{~\alpha_0~}
B^1\xrightarrow{~\alpha_1~}
B^2\xrightarrow{~\alpha_2~}\cdots\xrightarrow{~\alpha_{n-1}~}B^n\xrightarrow{\alpha_{n}~}C].$$
It suffices to show that

\begin{equation}\label{equ1}\overline{\C}(-,B^{n})\xLongrightarrow{\overline{\C}(-,\ \overline{\alpha_{n}})}\overline{\C}(-,C)
\xLongrightarrow{~\overline{\del\ssh}~}\overline{\E}(-,A)
\end{equation}
and
\begin{equation}\label{equ2}\overline{\C}(B^1,-)
\xLongrightarrow{\overline{\C}(\overline{\alpha_{0}},\ -)}\overline{\C}(A,-)
\xLongrightarrow{~\overline{\del\ush}~}\overline{\E}(C,-)
\end{equation}
are exact.  We only prove that the sequence (\ref{equ1}) is exact, and the exactness of the sequence (\ref{equ2}) can be similarly proved.

For any $M\in\overline{\C}$, we define
\begin{eqnarray*}
\overline{(\del\ssh)}_{M}\colon \overline{\C}(M,C) & \longrightarrow &\overline{\E}(M,A)\\
 \overline{f}& \longmapsto & f^{\ast}\del
\end{eqnarray*}
which is well-defined. If $\overline{f}=\overline{g}$, then there are morphisms
$u\colon M\to X$ and $v\colon X\to C$ such that
$f-g=vu$ where $X\in\X$. Note that $X\in\X\subseteq\P\cap\I$, we have $v^{\ast}\del=0$.
Thus $(f-g)^{\ast}\del=(vu)^{\ast}\del=u^{\ast}v^{\ast}\del=0$ and then $f^{\ast}\del=g^{\ast}\del$.
\smallskip

$\bullet$ we show ${\rm Im}~\overline{\C}(M,\ \overline{\alpha_{n}})\subseteq{\rm Ker}~\overline{(\del\ssh)}_{M}$ for any $M\in\overline{\C}$.
\smallskip

For any morphism $\overline f\in {\rm Im}~\overline{\C}(M,\ \overline{\alpha_{n}})$, there exists a morphism $\overline\varphi\colon M\to B^n$
such that $\overline f=\overline{\alpha_{n}}\circ\overline\varphi=\overline{\alpha_{n}\varphi}$.
Hence
$\overline{(\del\ssh)}_{M}(\overline f)=(\alpha_{n}\varphi)^{\ast}\del=\varphi^{\ast}(\alpha_n)^{\ast}\del=0$ implies $\overline{f}\in{\rm Ker}~\overline{(\del\ssh)}_{M}$.
\smallskip

$\bullet$ we show ${\rm Ker}~\overline{(\del\ssh)}_{M}\subseteq{\rm Im}~\overline{\C}(M,\ \overline{\alpha_n})$ for any $M\in\overline{\C}$.
\smallskip

If $\overline f\in{\rm Ker}~\overline{(\del\ssh)}_{M}$,
then $f^{\ast}\del=0$. Thus there exists a morphism
$\phi\colon M\to B^n$ such that
$f=\alpha_n\phi$ and then $\overline f=\overline \alpha_n\circ\overline\phi$.
This shows $\overline f\in{\rm Im}~\overline{\C}(M,\ \overline{\alpha_n})$.
Thus the pair $\Xd$ is an $n$-exangle in $\overline{\C}$.}

(R2) are clearly satisfied.
This shows that $\overline{\s}$ is an exact realization of $\overline{\E}$.
\smallskip

Let us confirm conditions (EA1) and (EA2). The remaining condition (EA2$^{\rm op}$) can be shown dually.

{\rm (EA1)}~ Let $X\ov{\overline{f}}{\lra}Y\ov{\overline{g}}{\lra}Z$ be any sequence of morphisms in $\overline{\C}$.
Assume that $\overline{f}$ and $\overline{g}$ are inflations. Then there are two conflations $U^{\mr}\in\CC$ and $V^{\mr}\in\CC$ satisfying $\overline{d^U_0}=\overline{f}$ and $\overline{d^V_0}=\overline{g}$, respectively.
Thus we assume
$$\overline{\s}(\delta)=[X\xrightarrow{~\overline{f}~}
Y\xrightarrow{~\overline{d_U^1}~}
U^2\xrightarrow{~\overline{d_U^2}~}\cdots\xrightarrow{~\overline{d_U^{n-1}}~}U^n\xrightarrow{\overline{d_U^{n}}~}U^{n+1}],$$
$$\overline{\s}(\eta)=[Y\xrightarrow{~\overline{g}~}
Z\xrightarrow{~\overline{d_V^1}~}
V^2\xrightarrow{~\overline{d_V^2}~}\cdots\xrightarrow{~\overline{d_V^{n-1}}~}V^n\xrightarrow{\overline{d_V^{n}}~}V^{n+1}].$$
As in the proof of (R0), we may assume
$$\s(\delta)=[X\xrightarrow{~f~}
Y\xrightarrow{~d_U^1~}
U^2\xrightarrow{~d_U^2~}\cdots\xrightarrow{~d_U^{n-1}~}U_n\xrightarrow{d_U^{n}~}U^{n+1}],$$
$$\s(\eta)=[Y\xrightarrow{~g~}
Z\xrightarrow{~d_V^1~}
V^2\xrightarrow{~d_V^2~}\cdots\xrightarrow{~d_V^{n-1}~}V^n\xrightarrow{d_V^{n}~}V^{n+1}].$$
Then by (EA1) for $(\C,\E,\s)$, we know that $gf$ is an inflation.
Thus the image $\overline{g}\circ \overline{f}=\overline{gf}$ of $gf$ in $\overline{\C}$ is also an inflation.
Dually, we can show that if $\overline{f}$ and $\overline{g}$ are deflations, then so is $\overline{g}\circ \overline{f}$.
\medskip

{\rm (EA2)}~ For any $\delta\in\overline{\E}(D,A)$ and $\overline{c}\in\overline{\C}(C,D)$,
let ${}_A\langle X^{\mr},{ \overline{c}}\uas\delta\rangle_C$ and ${}_A\langle Y^{\mr}, \delta\rangle_D$ be distinguished $n$-exangles.
As in the proof of (R0), we may assume
$$\s(c^{\ast}\delta)=[A\xrightarrow{~d_X^0~}
X^1\xrightarrow{~d_X^1~}
X^2\xrightarrow{~d_X^2~}\cdots\xrightarrow{~d_X^{n-1}~}X^n\xrightarrow{d_X^{n}~}C],$$
$$\s(\delta)=[A\xrightarrow{~d_Y^0~}
Y^1\xrightarrow{~d_Y^1~}
Y^2\xrightarrow{~d_Y^2~}\cdots\xrightarrow{~d_Y^{n-1}~}Y^n\xrightarrow{d_Y^{n}~}D].$$
Then by (EA2) for $(\C,\E,\s)$,
 $(1_A,c)$ has a  good lift $f^{\mr}$, in the sense that its mapping cone gives a distinguished $n$-exangle $\langle M^{\mr}_f,(d_X^0)\sas\delta\rangle$. Thus the image of these conditions in
$\overline{\C}$ shows (EA2) for $(\overline{\C},\overline{\E},\overline{\s})$.              \qed

{ \begin{remark}
Let $(\C, \mathbb{E}, \mathfrak{s})$ be an extriangulated category and $\X$ a full subcategory of $\C$.
If $\X$ satisfies $\X\subseteq\P\cap\I$, then
any $\E$-triangle  in $\C$ induces a weak kernel-cokernel sequence in $\overline{\C}:=\C/\X$. Indeed,
assume that $A\xrightarrow{~\alpha_0~}B\xrightarrow{~\alpha_1~}C\overset{\delta}{\dashrightarrow}$
is an $\E$-triangle in $\C$.
We claim that
 \begin{equation}\label{equ3}
\overline{\C}(-,A)\xLongrightarrow{\overline{\C}(-,\ \overline{\alpha_0})}\overline{\C}(-,B)\xLongrightarrow{\overline{\C}(-,\ \overline{\alpha_1})}\overline{\C}(-,C)
 \end{equation}
and
 \begin{equation}\label{equ4}
\overline{\C}(C,-)\xLongrightarrow{\overline{\C}(\overline{\alpha_1},\ -)}\overline{\C}(B,-)\xLongrightarrow{\overline{\C}(\overline{\alpha_{0}},\ -)}\overline{\C}(A,-)
 \end{equation}
are exact.

We only prove that the sequence (\ref{equ3}) is exact, and the exactness of the sequence (\ref{equ4}) can be similarly proved.

Since $\alpha_1\alpha_0=0$, we have $\overline\alpha_1\circ\overline\alpha_0=0$, which implies
${\rm Im}~\overline{\C}(-,\ \overline{\alpha_0})\subseteq {\rm Ker}~\overline{\C}(-,\ \overline{\alpha_1})$.

Conversely, for any $M\in\overline{\C}$ and $\overline{a}\in{\rm Ker}~\overline{\C}(M,\ \overline{\alpha_1})$, we have
$\overline\alpha_1\circ\overline a=0$. Then there are morphisms $b\colon M\to X$
and $c\colon X\to C$ such that $\alpha_1a=cb$ where $X\in\X\subseteq\P\cap\I$.
Since $X$ is a projective object,
there exists a morphism $d\colon X\to B$ such that $c=\alpha_1d$.
It follows that $\alpha_1(a-db)=\alpha_1a-\alpha_1db=0$.
So there exists a morphism
$e\colon M\to A$ such that $a-db=\alpha_0e$ and then
$\overline a=\overline\alpha_0\circ \overline e\in{\rm Im}~\overline{\C}(M,\ \overline{\alpha_0})$.
This shows that $A\xrightarrow{~\overline{\alpha_0}~}B\xrightarrow{~\overline{\alpha_1}~}C$
is a weak kernel-cokernel sequence.
By Theorem \ref{main}, we have that the ideal quotient $\C/\X$ is an extriangulated category.

This result is just the Proposition 3.30 in \cite{NP}.
Hence our result recovers and extends a result of Nakaoka and Palu \cite[Proposition 3.30]{NP}.
\end{remark}}


%
\medskip

{ The following example shows that the ideal quotient $\C/\X$ in Theorem \ref{main} is not an $n$-exangulated category in general.}

 \begin{example}\label{example:3.4}
Let $\Lambda$ be the path algebra of the
quiver
$$1\xrightarrow{~\alpha~}2\xrightarrow{~\beta~}3\xrightarrow{~\gamma~}4$$
with relation $\alpha\beta\gamma=0$.
Then $\mod\Lambda$ has a unique $2$-cluster tilting subcategory $\C$ consisting of all direct
sums of projective modules and injective modules. By \cite[Theorem 3.16]{J}, we know that $\C$ is a $2$-abelian category which can be viewed as
a $2$-exangulated category.
The Auslander-Reiten quiver of $\mod\Lambda$ is the following
$$\xymatrix@C=0.7cm@R0.7cm{
&&&{\begin{smallmatrix}
2\\  3\\4
\end{smallmatrix}} \ar[dr]
&&{\begin{smallmatrix}
1\\2\\3
\end{smallmatrix}} \ar[dr]
&& \\
&&{\begin{smallmatrix}
3\\4
\end{smallmatrix}}\ar[dr] \ar[ur]
&& {\begin{smallmatrix}
2\\3
\end{smallmatrix}} \ar[dr] \ar[ur]
&& {\begin{smallmatrix}
1\\2
\end{smallmatrix}} \ar[dr] \\
&{\begin{smallmatrix}
4
\end{smallmatrix}}\ar[ur]
&&{\begin{smallmatrix}
3
\end{smallmatrix}} \ar[ur]
&&  {\begin{smallmatrix}
2
\end{smallmatrix}} \ar[ur]
&& {\begin{smallmatrix}
1
\end{smallmatrix}}}$$
That is $\C:=\add\{\begin{smallmatrix}
4
\end{smallmatrix}, \begin{smallmatrix}
3\\ 4
\end{smallmatrix},\begin{smallmatrix}
2\\ 3\\ 4
\end{smallmatrix},\begin{smallmatrix}
1\\ 2 \\3
\end{smallmatrix},\begin{smallmatrix}
1\\ 2
\end{smallmatrix},\begin{smallmatrix}
1
\end{smallmatrix}\}$. Take
$$\X:=\add\{\begin{smallmatrix}
2\\ 3\\ 4
\end{smallmatrix}\}\subseteq \add\{ \begin{smallmatrix}
2\\ 3 \\4
\end{smallmatrix}, \begin{smallmatrix}
1\\ 2 \\ 3
\end{smallmatrix}\}=\P\cap\I.$$
Note that $\begin{smallmatrix}
4
\end{smallmatrix}\to \begin{smallmatrix}
2\\ 3\\ 4
\end{smallmatrix}\to \begin{smallmatrix}
1\\ 2
\end{smallmatrix}\to \begin{smallmatrix}
1
\end{smallmatrix}$
is a $2$-exact sequence in $\C$, { but it induces the sequence
$$\begin{smallmatrix}
4
\end{smallmatrix}\to \begin{smallmatrix}
0
\end{smallmatrix}\to \begin{smallmatrix}
1\\ 2
\end{smallmatrix}\to \begin{smallmatrix}
1
\end{smallmatrix}$$
which is not a weak kernel-cokernel sequence in $\C/\X$.
Therefore $\C/\X$ equipped with $\overline{\mathbb{E}}$ and
$\overline{\s}$ is not a $2$-exangulated category by Theorem \ref{main}.}
\end{example}

Now { we} give an example to explain our main result in this section.

{ \begin{example}\label{example:3.4}
Let $\Lambda$ be the path algebra of the
quiver
$$1\xrightarrow{~\alpha~}2\xrightarrow{~\beta~}3$$
with relation $\alpha\beta=0$.
Then $\mod\Lambda$ has a unique $2$-cluster tilting subcategory $\C$ consisting of all direct
sums of projective  modules and injective modules. By \cite[Theorem 3.16]{J}, we know that $\C$ is a $2$-abelian category which can be viewed as
a $2$-exangulated category.
The Auslander-Reiten quiver of  $\mod\Lambda$ is the following
$$\xymatrix@C=0.7cm@R0.7cm{
&{\begin{smallmatrix}
2\\ 3
\end{smallmatrix}}
\ar[dr]&&{\begin{smallmatrix}
1\\2
\end{smallmatrix}}\ar[dr]\\
{\begin{smallmatrix}
3
\end{smallmatrix}}\ar[ur]&&{\begin{smallmatrix}
2
\end{smallmatrix}}\ar[ur]&&{\begin{smallmatrix}
1
\end{smallmatrix}}}$$
That is $\C:=\add\{\begin{smallmatrix}
3
\end{smallmatrix}, \begin{smallmatrix}
2\\ 3
\end{smallmatrix},\begin{smallmatrix}
1\\2
\end{smallmatrix},\begin{smallmatrix}
1
\end{smallmatrix}\}$.
Take
$$\X:=\add\{\begin{smallmatrix}
2\\ 3
\end{smallmatrix},\begin{smallmatrix}
1\\2
\end{smallmatrix}\}=\P\cap\I.$$
We know that there exists a unique non-trivial $2$-exact sequence
$\begin{smallmatrix}
3
\end{smallmatrix}\to \begin{smallmatrix}
2\\ 3
\end{smallmatrix}\to \begin{smallmatrix}
1\\2
\end{smallmatrix}\to \begin{smallmatrix}
1
\end{smallmatrix}$
in $\C$. It follows that
$\begin{smallmatrix}
3
\end{smallmatrix}\to \begin{smallmatrix}
0
\end{smallmatrix}\to \begin{smallmatrix}
0
\end{smallmatrix}\to \begin{smallmatrix}
1
\end{smallmatrix}$
is a weak kernel-cokernel sequence in $\C/\X$.
By Theorem \ref{main}, we obtain that $$\C/\X=\add\{\begin{smallmatrix}
1
\end{smallmatrix}, \begin{smallmatrix}
3
\end{smallmatrix}\}$$
 equipped with $\overline{\mathbb{E}}$ and
$\overline{\s}$ is a $2$-exangulated category, but it is neither
a $2$-exact category nor a $4$-angulated category.
In fact, $\begin{smallmatrix}
3
\end{smallmatrix}\to \begin{smallmatrix}
2\\ 3
\end{smallmatrix}\to \begin{smallmatrix}
1\\ 2
\end{smallmatrix}\to \begin{smallmatrix}
1
\end{smallmatrix}$
is a $2$-exact sequence in $\C$, but it induces the sequence
$$\begin{smallmatrix}
3
\end{smallmatrix}\to \begin{smallmatrix}
0
\end{smallmatrix}\to \begin{smallmatrix}
0
\end{smallmatrix}\to \begin{smallmatrix}
1
\end{smallmatrix}$$
 which is not a 2-exact sequence in $\C/\X$. So $(\C/\X,\overline{\mathbb{E}},\overline{\s})$ is not 2-exact.
Since $\C/\X$ has non-zero projective objects,
 it is not a $4$-angulated category.
\end{example}}

\section{$n$-proper classes in $n$-exangulated categories}\label{section4}
In this section, we introduce a notion of $n$-proper class in an $n$-exangulated category,
and give a new class of an $n$-exangulated category.
Unless otherwise specified, we assume that $(\C,\E,\s)$ is an $n$-exangulated category.

A morphism $f^{\mr}\colon\Xd\to\langle Y^{\mr},\rho\rangle$ of distinguished $n$-exangles is called a \emph{weak isomorphism} if
{$f^{0}$ and $f^{n+1}$} are isomorphisms. { It should be noted that the notion of weak isomorphisms defined here is different from the one defined by Geiss, Keller and Oppermann in $(n +2)$-angulated categories or by Jasso  in $n$-exact categories (see \cite{GKO} and \cite{J})}.

\begin{proposition}\label{proposition:4.1} If $f^{\mr}\colon\Xd\to\langle Y^{\mr},\rho\rangle$ is a weak isomorphism of distinguished $n$-exangles, then $f^{\mr}:X^{\mr}\to Y^{\mr}$ is a homotopy equivalence in $\CC$.
\end{proposition}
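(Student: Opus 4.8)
The plan is to exhibit an explicit homotopy inverse of $f^{\mr}$ and then to check that both composites are homotopic to the identity. First, since $f^0$ and $f^{n+1}$ are isomorphisms, I claim $((f^0)^{-1},(f^{n+1})^{-1})$ is a morphism of extensions $\rho\to\del$. Indeed, starting from the defining equality $(f^0)\sas\del=(f^{n+1})\uas\rho$, applying $((f^0)^{-1})\sas$ and then $((f^{n+1})^{-1})\uas$, and using that push-forwards and pull-backs commute (because $\E$ is a bifunctor) together with $a\sas b\sas=(ab)\sas$ and $b\uas a\uas=(ab)\uas$, yields $((f^0)^{-1})\sas\rho=((f^{n+1})^{-1})\uas\del$. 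As $\s(\del)=[X^{\mr}]$ and $\s(\rho)=[Y^{\mr}]$, axiom (R0) of Definition \ref{def1} then supplies a lift $g^{\mr}\in\CC(Y^{\mr},X^{\mr})$ with $g^0=(f^0)^{-1}$ and $g^{n+1}=(f^{n+1})^{-1}$. This $g^{\mr}$ is the candidate homotopy inverse.

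Next I would reduce the statement to a single null-homotopy claim. Both $g^{\mr}f^{\mr}$ and $\id_{X^{\mr}}$ are morphisms of the $n$-exangle $\Xd$ to itself lifting $(1_{X^0},1_{X^{n+1}})$, so their difference $h^{\mr}:=g^{\mr}f^{\mr}-\id_{X^{\mr}}$ is again a morphism of $n$-exangles with $h^0=0$ and $h^{n+1}=0$; the same applies to $f^{\mr}g^{\mr}-\id_{Y^{\mr}}$. Hence it suffices to prove: \emph{any endomorphism $h^{\mr}$ of an $n$-exangle $\Xd$ with $h^0=0=h^{n+1}$ is null-homotopic.}

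I would then construct the contracting homotopy $s^i\co X^i\to X^{i-1}$ by induction, using for the internal degrees the exactness of the Hom-complexes, that is conditions (a) and (b) of an $n$-exangle. Having built $s^1,\dots,s^i$ so that $h^0,\dots,h^{i-1}$ are realised, the residue $t^i:=h^i-d_X^{i-1}s^i$ satisfies $t^i d_X^{i-1}=0$ (combine the chain-map relation $h^i d_X^{i-1}=d_X^{i-1}h^{i-1}$ with the identity $s^i d_X^{i-1}=h^{i-1}-d_X^{i-2}s^{i-1}$ from the previous step and $d_X^{i-1}d_X^{i-2}=0$). Exactness of condition (b) at the term $\C(X^i,-)$, evaluated at $X^i$, then gives $s^{i+1}$ with $s^{i+1}d_X^i=t^i$, which settles degrees $1,\dots,n$.

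The hard part will be the extreme degrees, where the Hom-complexes are no longer exact and the extension $\del$ must intervene. At the top, the residual map $E:=h^{n+1}-d_X^n s^{n+1}=-d_X^n s^{n+1}$ satisfies $E\uas\del=-(s^{n+1})\uas(d_X^n)\uas\del=0$, because $\Xd$ is an $\E$-attached complex and hence $(d_X^n)\uas\del=0$; exactness of $\C(-,X^{n+1})\xrightarrow{\del\ssh}\E(-,X^0)$ (condition (a)), evaluated at $X^{n+1}$, then forces $E=d_X^n\circ w$ for some $w$, and one also checks $E d_X^n=0$. The delicate point is to reconcile this top correction with the equation already secured in degree $n$ so as to get one homotopy valid in every degree: $w$ need not vanish after composition with $d_X^n$, so it must be propagated downward through the internal degrees, each correction being solvable precisely because the relevant composite with $d_X^n$ vanishes and condition (a) applies. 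The dual computation, using $(d_X^0)\sas\del=0$ and the exactness of $\del\ush$, handles the bottom degree. Carrying out this finite cascade — or, equivalently, building the homotopy simultaneously from both ends and matching them in the middle — completes the null-homotopy. Applying the boxed claim to $g^{\mr}f^{\mr}-\id$ and to $f^{\mr}g^{\mr}-\id$ shows that $f^{\mr}$ and $g^{\mr}$ are mutually inverse in $\mathbf{K}^{n+2}_{\C}$, so $f^{\mr}$ is a homotopy equivalence in $\CC$.
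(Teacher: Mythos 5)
Your opening move is exactly the paper's: the identity $((f^{n+1})^{-1})^{\ast}\del=((f^0)^{-1})_{\ast}\rho$ and the appeal to (R0) to get a lift $g^{\mr}$ of $((f^0)^{-1},(f^{n+1})^{-1})$ are correct and coincide with the first half of the paper's proof. The proof breaks at your reduction step: the boxed claim that \emph{every} endomorphism $h^{\mr}$ of an $n$-exangle with $h^0=0=h^{n+1}$ is null-homotopic is false, already for $n=1$. Take $\Lambda=k[\epsilon]/(\epsilon^2)$, let $\C=D^b(\mod\Lambda)$ with its triangulated (hence $1$-exangulated) structure, put $A=\Lambda$, $e=\epsilon\colon A\to A$, and form the cone triangle $A\xrightarrow{e}A\xrightarrow{p}B\xrightarrow{q}\Sigma A$; rotating it gives the distinguished $1$-exangle
$$A\xrightarrow{~p~}B\xrightarrow{~q~}\Sigma A\overset{-\Sigma e}{\dashrightarrow}.$$
Since $e\circ e=0$, exactness of $\C(-,A)$ on the cone triangle yields $u\colon B\to A$ with $up=e$, and $h^{\mr}:=(0,\,pu,\,0)$ is an endomorphism of this $1$-exangle with vanishing end components ($pup=pe=0$ and $qpu=0$). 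It is not null-homotopic: a null-homotopy would give $pu=ps^1+s^2q$ with $s^1p=0$ and $qs^2=0$; then $qs^2=0$ forces $s^2=pt$, hence $p(u-s^1-tq)=0$, hence $u-s^1-tq=ev$ for some $v\colon B\to A$; precomposing with $p$ and using $up=e$, $s^1p=0$, $qp=0$ gives $e=e\,(vp)$, while $vp\in{\rm Im}\,\C(p,A)=\ker(-\circ e)=(\epsilon)$ forces $e\,(vp)=0\neq\epsilon$, a contradiction. Your reduction fails on exactly this example: $f^{\mr}=(1_A,\,1_B+pu,\,1_{\Sigma A})$ is a weak isomorphism, $g^{\mr}=\id_{X^{\mr}}$ is a perfectly legitimate lift of $(1_A,1_{\Sigma A})$, and $g^{\mr}f^{\mr}-\id=h^{\mr}\not\sim 0$.

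Note that this $f^{\mr}$ is even an isomorphism of complexes, since $(pu)^2=p(up)u=peu=0$, so Proposition 4.1 holds for it; what fails is only the stronger statement you set out to prove. This is precisely the distinction the paper's proof respects: it never claims $g^{\mr}f^{\mr}\sim\id$, but cites [HLN, Proposition 2.21], which says that a morphism of $n$-exangles with identity end components realizing the same extension is a \emph{homotopy equivalence} (in general \emph{not} homotopic to the identity), and then concludes that $f^{\mr}$, having a left and a right homotopy inverse, is itself a homotopy equivalence. Correspondingly, the ``finite cascade'' you defer at the extreme degrees is not a bookkeeping issue that more care would settle: the residue $d_X^ns^{n+1}$ is a genuine obstruction (nonzero in the example above), and no system of corrections can remove it, because the statement being proved is false. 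Your internal-degree induction via conditions (a) and (b) is fine as far as it goes; to finish, you must abandon ``homotopic to the identity'' in favour of the weaker conclusion --- either by citing [HLN, Proposition 2.21], or by proving it, e.g.\ by showing that composites of sufficiently many morphisms with vanishing end components are null-homotopic, so that $\id-h^{\mr}$ is invertible in the homotopy category even when $h^{\mr}\not\sim 0$.
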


{\begin{proof} If  $f^{\mr}\colon\Xd\to\langle Y^{\mr},\rho\rangle$ is a weak isomorphism of distinguished $n$-exangles, then there exist $g^0:Y^0\to X^0$ and $g^{n+1}:Y^{n+1}\to X^{n+1}$ such that $g^0f^0=1_{X^0}, f^0g^0=1_{Y^0}, g^{n+1}f^{n+1}=1_{X^{n+1}}$ and  $f^{n+1}g^{n+1}=1_{Y^{n+1}}$. Note that $(f^0)_*\delta=(f^{n+1})^*\rho$. Then $(g^0)_*\rho=(g^0)_{*}(f^{n+1}g^{n+1})^{\ast}\rho$ $=
(g^0)_{*}(g^{n+1})^{\ast}(f^{n+1})^{\ast}\rho=(g^0)_{*}(g^{n+1})^{\ast}(f^{0})_{\ast}\delta
=(g^0)_{*}(f^{0})_{\ast}(g^{n+1})^{\ast}\delta=
(g^0f^{0})_{\ast}(g^{n+1})^{\ast}\delta=(g^{n+1})^{\ast}\delta$. Hence there is a morphism $g^{\mr}:\langle Y^{\mr}, \rho\rangle \to \langle X^{\mr},\delta\rangle$ of distinguished $n$-exangles where $g^{\mr}$ is a lift of $(g^0,g^{n+1})$.  Therefore $g^{\mr}f^{\mr}\colon\Xd\to\langle X^{\mr},\delta\rangle$ is a morphism of distinguished $n$-exangles with $g^0f^0=1_{X^0}$ and $g^{n+1}f^{n+1}=1_{X^{n+1}}$, it
follows from \cite[Proposition 2.21]{HLN} that $g^{\mr} f^{\mr}$ is a homotopy equivalence. Similarly, one can prove that $f^{\mr} g^{\mr}$ is a homotopy equivalence. Consequently, there exist $h^{\mr},\lambda^{\mr}\in{\CC(Y^{\mr},X^{\mr})}$ such that $ h^{\mr}f^{\mr}\sim 1_{X^{\mr}}$ and  $f^{\mr}\lambda^{\mr}\sim 1_{Y^{\mr}}$. This implies that $ f^{\mr}$ is a homotopy equivalence in $\CC$.
\end{proof}}

{ Let $\xi$ be a class of distinguished $n$-exangles. A distinguished $n$-exangle $\Xd$ is called a \emph{$\xi$-distinguished $n$-exangle} if it belongs to $\xi$. A morphism $f$ is called a \emph{$\xi$-inflation} (resp. \emph{$\xi$-deflation}) if there is a $\xi$-distinguished $n$-exangle $\Xd$ such that $d_X^0=f$ (resp. $d_X^n=f$).}

A class $\xi$ of distinguished $n$-exangles  is called {\it closed under  base change} if for any $\xi$-distinguished $n$-exangle $_{A}\Xd_{C}$ and any morphism $c:C'\to C$, then any distinguished $n$-exangle $_{A}{\langle Y^{\mr},c^{\ast}\del\rangle}_{C'}$ belongs to $\xi$. Dually, $\xi$ is called {\it closed under cobase change} if for any $\xi$-distinguished $n$-exangle $_{A}\Xd_{C}$ and any morphism $a:A\to A'$, then any distinguished $n$-exangle $_{A'}{\langle Y^{\mr},a_{\ast}\del\rangle}_{C}$ belongs to $\xi$.

A class $\xi$ of distinguished $n$-exangles is called {\it saturated} if for any commutative diagram in $\C$
$$\xymatrix{
     A\ar[d]_{c} \ar[r]^{a} & B\ar[d]^{b} \\
  C \ar[r]^{d} & D,\\}
  $$
  where $a,b$ and $d$ are deflations. If $a$ and $b$ are $\xi$-deflations, then so is $d$.

A distinguished $n$-exangle $_{A}\Xd_{C}$ is called {\it split} if $\delta=0$. It follows from \cite[Claim 2.15]{HLN} that it is split if and only if $d^{0}_{X}$ is a section or $d^{n}_{X}$ is a retraction. The full subcategory  consisting of the split distinguished $n$-exangles will be denoted by $\Delta_0$.

\begin{definition}\label{definition:4.2}
 A class $\xi$ of distinguished $n$-exangles is called an {\it $n$-proper class} if the following conditions hold:
\begin{enumerate}
\item[\rm (1)] $\Delta_0\subseteq \xi$ and $\xi$ is closed under weak isomorphisms and finite coproducts.

\item[\rm (2)] $\xi$ is closed under base change and cobase change.

\item[\rm (3)] $\xi$ is saturated.
\end{enumerate}
\end{definition}

 { If $(\C, \mathcal{E})$ is an $n$-exact category and $\xi$ is a class of $n$-exact sequences, then $\xi$ is called an {\it $n$-proper class} in $(\C, \mathcal{E})$  if the class of distinguished $n$-exangles induced by $\xi$ is an $n$-proper class in the $n$-exangulated category induced by $(\C, \mathcal{E})$. Similarly, we have the definition of an {\it $n$-proper class} for any $(n+2)$-angulated category.

}

The following lemma is  very important, which will be used later.

\begin{lemma}\label{lemma:4.3}Let $\xi$ be a class of distinguished $n$-exangles in $(\C,\E,\s)$ satisfying the conditions {\rm (1)} and {\rm (2)} in Definition {\rm \ref{definition:4.2}}. { Then the following are equivalent.
\begin{enumerate}
\item[\rm (1)] $\xi$ is saturated.

\item[\rm (2)] For any commutative diagram in $\C$
$$\xymatrix{
     A\ar[d]_{c} \ar[r]^{a} & B\ar[d]^{b} \\
  C \ar[r]^{d} & D,\\}
  $$
  where $a,b$ and $c$ are inflations. If $a$ and $b$ are $\xi$-inflations, then so is $c$.
\end{enumerate}}
\end{lemma}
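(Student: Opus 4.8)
The two implications are dual to one another: reading the statement in the opposite $n$-exangulated category $(\C\op,\E\op,\s\op)$ interchanges inflations with deflations and base change with cobase change, carries $\xi$ to a class again satisfying conditions (1)--(2) of Definition \ref{definition:4.2}, and swaps ``saturated'' with condition (2). I would therefore prove only (1)$\Rightarrow$(2) in detail and obtain (2)$\Rightarrow$(1) by the same argument applied dually.

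For (1)$\Rightarrow$(2), begin with the given square, in which $a,b,c$ are inflations, $ba=dc$, and $a,b$ are $\xi$-inflations. The first move is to complete $a$, $b$ and the composite $ba$ (an inflation by (EA1)) to distinguished $n$-exangles, writing $\alpha$ and $\beta$ for the extensions realizing $a$ and $b$; by hypothesis $\alpha,\beta\in\xi$. I want to emphasize at the outset that the desired conclusion is genuinely nonformal --- a left factor of an inflation need not be an inflation --- so closure under base and cobase change alone cannot suffice and saturation must be used.

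The central construction is the $n$-exangulated octahedron: applying (EA2$\op$) to the composable inflations $a$ and $b$ produces a distinguished $n$-exangle whose extension is the cobase change of $\beta$ along the deflation completing $a$, hence lies in $\xi$, together with morphisms of $n$-exangles comparing the exangles of $a$, $b$ and $ba$. The factorization $ba=dc$ supplies, through $d$, a morphism of $n$-exangles from the exangle realizing $c$ to that realizing $ba$; its extension-compatibility identity expresses the extension $\kappa$ realizing $c$ as a base change of the extension $\lambda$ realizing $ba$. Combining this with the comparison of $ba$ and $b$ and chasing the resulting identities, I expect to show that a cobase change of $\kappa$ along the $\xi$-inflation $a$ already lies in $\xi$.

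What then remains --- and what I expect to be the main obstacle --- is the descent from ``a cobase change of $\kappa$ along a $\xi$-inflation lies in $\xi$'' to ``$\kappa$ itself lies in $\xi$'', equivalently that the deflation completing $c$ is a $\xi$-deflation. This is precisely the point at which saturation, rather than the cheaper base/cobase closures, is indispensable, because an $n$-exangulated category has no rotation and an inflation cannot be identified with the deflation completing it. The plan is to use the homotopy bicartesian square furnished by the octahedron, together with suitable base changes of the $\xi$-exangles already obtained, to assemble a commutative square of deflations whose top and right edges are $\xi$-deflations, and then to invoke saturation (Definition \ref{definition:4.2}(3)) to conclude that the remaining edge, the deflation completing $c$, is a $\xi$-deflation; closure of $\xi$ under weak isomorphisms, via Proposition \ref{proposition:4.1}, lets one pass back from this deflation to the exangle realizing $c$, showing that $c$ is a $\xi$-inflation. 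The delicate work throughout is to arrange the octahedral data into a deflation square of exactly the shape demanded by Definition \ref{definition:4.2}(3), and to verify the extension-compatibility of each comparison morphism of $n$-exangles used along the way.
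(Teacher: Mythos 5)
Your duality reduction of (2)$\Rightarrow$(1) to (1)$\Rightarrow$(2) is sound (the paper simply remarks that the converse implication is ``similar''), and the first part of your plan for (1)$\Rightarrow$(2) coincides with the paper's proof: realize $a$, $b$ and $ba$ by distinguished $n$-exangles $\langle X^{\mr},\delta\rangle$, $\langle Y^{\mr},\theta\rangle$, $\langle Z^{\mr},\tau\rangle$ with $\delta,\theta\in\xi$, take a good lift $f^{\mr}\colon X^{\mr}\to Z^{\mr}$ of $(1_A,b)$ via \cite[Proposition 3.5]{HLN}, and use closure under base change to see that its mapping cone, whose extension is $a_{\ast}\tau=(g^{n+1})^{\ast}\theta$, lies in $\xi$. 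Your intermediate claim that $a_{\ast}\kappa\in\xi$ (where $\kappa$, the paper's $\gamma$, realizes $c$) is also correct, since $\kappa=(s^{n+1})^{\ast}\tau$ gives $a_{\ast}\kappa=(s^{n+1})^{\ast}a_{\ast}\tau=(g^{n+1}s^{n+1})^{\ast}\theta$.

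The gap is in your final descent, exactly the step you flag as the main obstacle. You propose to apply saturation to a deflation square whose bottom edge is the deflation completing $c$, i.e.\ to descend from $a_{\ast}\kappa\in\xi$ directly to $\kappa\in\xi$. But the only mechanism for producing such squares here is a good lift with distinguished mapping cone, and a good lift from $\langle X^{\mr},\delta\rangle$ into the exangle $\langle V^{\mr},\kappa\rangle$ realizing $c$ would require a morphism $X^1=B\to V^1=C$ through which $c$ factors; the hypothesis $dc=ba$ supplies no such morphism, so no saturation square with $d_V^n$ on its bottom edge can be assembled from the octahedral data. The paper's proof avoids this dead end by aiming saturation at the composite instead: in the square with top edge $\mathrm{diag}(d_X^n,1_{Z^n})$ (a $\xi$-deflation by the split-exangle-plus-finite-coproduct trick, a maneuver absent from your sketch), right edge $d_{M_f}^n=\begin{bmatrix}f^{n+1}&d_Z^n\end{bmatrix}$, left edge $\begin{bmatrix}f^{n}&1\end{bmatrix}$ and bottom edge $d_Z^n$, saturation shows that the deflation completing $ba$ is a $\xi$-deflation; then $\tau\in\xi$ follows by \emph{cobase-change} closure through the comparison $h^{\mr}$ with $h^n=1$, $h^{n+1}=1$, $(h^0)_{\ast}\rho=\tau$, and finally $\kappa=(s^{n+1})^{\ast}\tau\in\xi$ by base-change closure alone --- neither saturation nor any weak-isomorphism argument occurs at the $c$ stage. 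Relatedly, your proposed finisher via Proposition \ref{proposition:4.1} does not work: knowing that the deflation completing $c$ is a $\xi$-deflation only provides \emph{some} $\xi$-exangle $\langle W^{\mr},\rho\rangle$ with the same last differential, and the comparison morphism from it to $\langle V^{\mr},\kappa\rangle$ is the identity in degrees $n$ and $n+1$ but need not be an isomorphism in degree $0$, so it is not a weak isomorphism; the correct tool at that point is again cobase change.
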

\begin{proof} { We only show $(1)\Rightarrow (2)$, and the proof of $(2)\Rightarrow (1)$ is similar.}

$(1)\Rightarrow (2)$. Let $_{A}\Xd_{M}$ be a $\xi$-distinguished $n$-exangle with $d_{X}^{0}=a$, $X^0=A$, and $X^1=B$, and $_{B}{\langle Y^{\mr},\theta\rangle}_{N}$ a $\xi$-distinguished $n$-exangle with $d_{Y}^{0}=b$, $Y^0=B$ and $Y^1=D$. Since $d_{X}^{0}$ and $d_{Y}^{0}$ are inflations, their composition $d_{Y}^{0}d_{X}^{0}$ becomes an inflation by (EA1) in Definition \ref{definition:2.10}. Thus there is some distinguished $n$-exangle $_{A}{\langle Z^{\mr},\tau\rangle}_{L}$ which satisfies $Z^1=D$ and $d_{Z}^{0}=d_{Y}^{0}d_{X}^{0}$ as follows

\[ A\xrightarrow{d_{Z}^{0}}D\xrightarrow{d_{Z}^{1}}Z^{2}\xrightarrow{d_{Z}^{2}}\cdots\xrightarrow{d_{Z}^{n-1}}Z^{n}\xrightarrow{d_Z^{n}}L\overset{\tau}{\dashrightarrow}. \]
Applying \cite[Proposition 3.5]{HLN}, we have the following commutative diagram
$$\xymatrix{A\ar[r]^{d_{X}^{0}}\ar@{=}[d]&B\ar[r]^{d_{X}^{1}}\ar[d]^{d_{Y}^{0}}&X^{2}\ar@{-->}[d]^{f^{2}}\ar[r]&\cdots \ar[r]& X^{n}\ar@{-->}[d]^{f^{n}}\ar[r]^{d_{X}^{n}}& M \ar@{-->}[d]^{f^{n+1}} \ar@{-->}[r]^{\delta}&\\
   A\ar[r]^{d_{Z}^{0}}&D\ar[r]^{d_{Z}^{1}}&Z^{2}\ar[r]&\cdots \ar[r]& Z^{n}\ar[r]^{d_{Z}^{n}}& L  \ar@{-->}[r]^{\tau}&.}$$
Thus we have a morphism $f^{\mr}\colon\Xd\to\langle Z^{\mr},\tau\rangle$ which satisfies $f^0=1_{A}$, $f^{1}=d_{Y}^{0}$ and makes $_{B}\langle M^{\mr}_f,(d_X^0)\sas\tau\rangle_{L}$ a distinguished $n$-exangle. Applying \cite[Proposition 3.5]{HLN} again, we have the following commutative diagram
$$\xymatrix{B\ar[r]^{d_{M_{f}}^{0}}\ar@{=}[d]& X^{2}\oplus D \ar[r]\ar[d]^{\tiny\begin{bmatrix}0&1\end{bmatrix}}&X^{3}\oplus Z^{2}\ar@{-->}[d]^{g^{2}}\ar[r]&\cdots \ar[r]& M\oplus Z^{n}\ar@{-->}[d]^{g^{n}}\ar[r]^{d_{M_{f}}^{n}}& L \ar@{-->}[d]^{g^{n+1}} \ar@{-->}[r]^{(d_X^0)\sas\tau}&\\
   B\ar[r]^{d_{Y}^{0}}&D\ar[r]^{d_{Y}^{1}}&Y^{2}\ar[r]&\cdots \ar[r]& Y^{n}\ar[r]^{d_{Y}^{n}}& N  \ar@{-->}[r]^{\theta}&,}$$
   where $d_{M_f}^0=\begin{bmatrix}-d_X^1\\ d_Y^{0}\end{bmatrix},$
$d_{M_f}^n=\begin{bmatrix}f_{n+1}&d_Z^n\end{bmatrix}$. Hence we have a morphism $g^{\mr}\colon{\langle M^{\mr}_f,(d_X^0)\sas\tau\rangle}\to\langle Y^{\mr},\theta\rangle$ which satisfies $g^0=1_{B}$, $g^{1}=\begin{bmatrix}0&1\end{bmatrix}$ and makes $_{X^{2}\oplus D}\langle M^{\mr}_g, {\tiny\begin{bmatrix}-d_X^1\\ d_Y^{0}\end{bmatrix}}\sas \theta\rangle_{N}$ a distinguished $n$-exangle. In particular, $(d_{X}^{0})\sas\tau=(g^{n+1})^{\ast}\theta$. Since ${\langle Y^{\mr},\theta\rangle}$ is a $\xi$-distinguished $n$-exangle and $\xi$ is closed under base change, $\langle M^{\mr}_f,(d_X^0)\sas\tau\rangle$ is a $\xi$-distinguished $n$-exangle.

Note that $0\to0\to\cdots\to0\to Z^{n}\ov{1_{Z^{n}}}{\lra}Z^{n}\overset{0}{\dashrightarrow}$ is a split distinguished $n$-exangle. Then it is a distinguished $n$-exangle in $\xi$. Thus $X^0\xrightarrow{{ d_{X}^{0}}}X^{1}\xrightarrow{d_{X}^{1}}X^{2}\xrightarrow{d_{X}^{2}}\cdots\xrightarrow{d_{X}^{n-1}}X^{n}\oplus Z^{n}\xrightarrow{\tiny \begin{bmatrix}{ d_X^{n}}&0\\ 0&1_{Z^{n}}\end{bmatrix}}M\oplus Z^{n}\overset{\delta}{\dashrightarrow}$ is a distinguished $n$-exangle in $\xi$ since $\xi$ is closed under finite coproducts by hypothesis. Consider the following commutative diagram
$$\xymatrix@C=1.6cm@R0.8cm{
     X^{n}\oplus Z^{n}\ar[d]_{{\tiny\begin{bmatrix}f^{n}&1 \end{bmatrix}}} \ar[r]^{\tiny \begin{bmatrix}d_X^{n}&0\\ 0&1_{Z^{n}}\end{bmatrix}} & M\oplus Z^{n}\ar[d]^{\tiny\begin{bmatrix}f^{n+1}&d_{Z}^{n} \end{bmatrix}} \\
 Z^{n} \ar[r]^{d_{Z}^{n}} & L,\\}
  $$
  where  $d_{Z}^{n}$, $\begin{bmatrix}d_X^{n}&0\\ 0&1_{Z^{n}}\end{bmatrix}$, $\begin{bmatrix}f^{n+1}&d_{Z}^{n} \end{bmatrix}$ are deflations. Note that $\begin{bmatrix}f^{n+1}&d_{Z}^{n} \end{bmatrix}$ and $\begin{bmatrix}d_X^{n}&0\\ 0&1_{Z^{n}}\end{bmatrix}$ are $\xi$-deflations. Then $d_{Z}^{n}$ is a $\xi$-deflation because $\xi$ is saturated. Assume that $_{E}{\langle W^{\mr},\rho\rangle}_{L}$ is a $\xi$-distinguished $n$-exangle with $d_{W}^{n}=d_{Z}^{n}$ and $W^n=Z^{n}$. By the dual of \cite[Proposition 3.5]{HLN}, we have the following commutative diagram
  $$\xymatrix{E\ar[r]^{d_{W}^{0}}\ar@{-->}[d]^{h^{0}}&W^{1}\ar[r]^{d_{W}^{1}}\ar@{-->}[d]^{h^{1}}&W^{2}\ar@{-->}[d]^{h^{2}}\ar[r]&\cdots \ar[r]& Z^{n}\ar@{=}[d]\ar[r]^{d_{Z}^{n}}& L \ar@{=}[d] \ar@{-->}[r]^{\rho}&\\
   A\ar[r]^{d_{Z}^{0}}&D\ar[r]^{d_{Z}^{1}}&Z^{2}\ar[r]&\cdots \ar[r]& Z^{n}\ar[r]^{d_{Z}^{n}}& L  \ar@{-->}[r]^{\tau}&.}$$
   Thus we have a morphism $h^{\mr}\colon{\langle W^{\mr},\rho\rangle}\to\langle Z^{\mr},\tau\rangle$ which satisfies $h^{n}=1_{Z^{n}}$, $h^{n+1}=1_{L}$ and $(h^{0})_{\ast}\rho=\tau$, and hence $\langle Z^{\mr},\tau\rangle$ is a $\xi$-distinguished $n$-exangle because $\xi$ is closed under cobase change. Since $c:A\to C$ is an inflation, there is a distinguished $n$-exangle $_{A}{\langle V^{\mr},\gamma\rangle}_{K}$  with $d_{V}^{0}=c$ and $V^1=C$. By \cite[Proposition 3.5]{HLN}, we have the following commutative diagram $$\xymatrix{A\ar[r]^{c}\ar@{=}[d]&C\ar[r]^{d_{V}^{1}}\ar[d]^{d}&V^{2}\ar@{-->}[d]^{s^{2}}\ar[r]&\cdots \ar[r]& V^{n}\ar@{-->}[d]^{s^{n}}\ar[r]^{d_{V}^{n}}& K \ar@{-->}[d]^{s^{n+1}} \ar@{-->}[r]^{\gamma}&\\
   A\ar[r]^{d_{Z}^{0}}&D\ar[r]^{d_{Z}^{1}}&Z^{2}\ar[r]&\cdots \ar[r]& Z^{n}\ar[r]^{d_{Z}^{n}}& L  \ar@{-->}[r]^{\tau}&.}$$
   Thus we have a morphism $s^{\mr}\colon{\langle V^{\mr},\gamma\rangle}\to\langle Z^{\mr},\tau\rangle$ which satisfies $s^{0}=1_{A}$ and $s^{1}=d$ such that $\gamma=(s^{n+1})^{\ast}\tau$, and hence $\gamma$ is a $\xi$-distinguished $n$-exangle because $\xi$ is closed under base change. So $c$ is a $\xi$-inflation, as desired.
   \end{proof}

By the proof of Lemma \ref{lemma:4.3}, we have the following corollary.
\begin{corollary}\label{corollary 4.4}  Let $\xi$ be an $n$-proper class in $(\C,\E,\s)$.
 Then the class of $\xi$-inflations {\rm(}resp. $\xi$-deflations{\rm)} is closed under compositions.
\end{corollary}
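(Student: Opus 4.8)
The plan is to read the statement off directly from the construction already carried out in the proof of Lemma~\ref{lemma:4.3}: the closure of $\xi$-inflations under composition is produced there as an intermediate object, and the $\xi$-deflation case is its formal dual. So I would treat only the inflation half, taking two composable $\xi$-inflations $a\colon A\to B$ and $b\colon B\to D$, and show that $ba$ is again a $\xi$-inflation; the deflation half then follows by replacing {\rm (EA1)} and {\rm (EA2)} with {\rm (EA2$\op$)} and base change with cobase change throughout.

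First I would realize the two maps: choose a $\xi$-distinguished $n$-exangle ${}_{A}\Xd_{M}$ with $d_X^0=a$ and a $\xi$-distinguished $n$-exangle ${}_{B}\langle Y^{\mr},\theta\rangle_{N}$ with $d_Y^0=b$. Since $a$ and $b$ are inflations, {\rm (EA1)} of Definition~\ref{definition:2.10} makes the composite $ba=d_Y^0 d_X^0$ an inflation, so there is a distinguished $n$-exangle ${}_{A}\langle Z^{\mr},\tau\rangle_{L}$ with $d_Z^0=ba$. The whole point is then to prove $\langle Z^{\mr},\tau\rangle\in\xi$, as this exhibits $ba$ as a $\xi$-inflation.

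To do so I would run exactly the chain of steps appearing in the proof of Lemma~\ref{lemma:4.3}. Apply {\rm (EA2)} to $(1_A,d_Y^0)$ to obtain the distinguished $n$-exangle $\langle M_f^{\mr},(d_X^0)\sas\tau\rangle$; its extension satisfies $(d_X^0)\sas\tau=(g^{n+1})\uas\theta$, so closure of $\xi$ under base change places it in $\xi$. Next form the coproduct of $\Xd$ with the split $n$-exangle on $Z^n$, which lies in $\xi$ since $\Delta_0\subseteq\xi$ and $\xi$ is closed under finite coproducts, making $\begin{bmatrix}d_X^n&0\\ 0&1_{Z^n}\end{bmatrix}$ a $\xi$-deflation. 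Using the commutative square relating the three deflations $d_Z^n$, $\begin{bmatrix}d_X^n&0\\ 0&1_{Z^n}\end{bmatrix}$ and $\begin{bmatrix}f^{n+1}&d_Z^n\end{bmatrix}$, I would invoke saturation to conclude that $d_Z^n$ is a $\xi$-deflation. Finally, comparing $\tau$ with the extension of a $\xi$-distinguished $n$-exangle ending in $d_Z^n$ via the dual of \cite[Proposition~3.5]{HLN} shows $\tau$ to be a cobase change, whence closure under cobase change gives $\langle Z^{\mr},\tau\rangle\in\xi$.

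The main obstacle is the correct bookkeeping of the four closure properties in the right order. One cannot conclude $\langle Z^{\mr},\tau\rangle\in\xi$ directly; one must first promote the bottom map $d_Z^n$ to a $\xi$-deflation through saturation, which is legitimate precisely because the two maps feeding the saturation square have already been certified as $\xi$-deflations in the previous steps, and only afterwards recover $\langle Z^{\mr},\tau\rangle$ by cobase change. Since all of this is carried out verbatim in the proof of Lemma~\ref{lemma:4.3}, no new computation is needed, and the dual assertion for $\xi$-deflations is immediate.
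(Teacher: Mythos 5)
Your proposal is correct and matches the paper exactly: the paper's proof of this corollary is literally ``by the proof of Lemma~\ref{lemma:4.3}'', whose $(1)\Rightarrow(2)$ argument contains, as its intermediate steps, precisely the chain you describe (realize $a$ and $b$, compose via {\rm (EA1)}, pass to the mapping cone and use base change, add the split $n$-exangle and use saturation, then finish with cobase change) to certify $\langle Z^{\mr},\tau\rangle\in\xi$. The dual treatment of $\xi$-deflations is likewise how the paper handles that half.
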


The following is our main result of this section.

\begin{theorem}\label{thma} Let $\xi$ be a class of distinguished $n$-exangles in $(\C,\E,\s)$ which is closed under weak isomorphism.
Set $\mathbb{E}_\xi:=\mathbb{E}|_\xi$, that is, $$\mathbb{E}_\xi(C, A)=\{\delta\in\mathbb{E}(C, A)~|~\delta~ \textrm{is realized as a distinguished n-exangle} \ {_{A}\Xd_{C}} \ \textrm{in} \ {\xi}\}$$ for any $A, C\in{\C}$, and $\mathfrak{s}_\xi:=\mathfrak{s}|_{\mathbb{E}_\xi}$. Then $\xi$ is an  $n$-proper class if and only if $(\C, \mathbb{E}_\xi, \mathfrak{s}_\xi)$ is an $n$-exangulated category.
\end{theorem}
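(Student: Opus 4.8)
The statement is an equivalence, so I would prove the two implications separately after isolating a common bridge. The key preliminary is that, because $\xi$ is closed under weak isomorphisms, an extension $\delta$ lies in $\E_\xi(C,A)$ if and only if \emph{every} distinguished $n$-exangle realizing $\delta$ is $\xi$-distinguished; this makes $\s_\xi=\s|_{\E_\xi}$ a well-defined correspondence with image in $\xi$. To see it, apply (R0) to $(1_A,1_C)\colon\delta\to\delta$: this produces a lift with identity end-terms between any two realizations of $\delta$, which is a weak isomorphism (a homotopy equivalence by Proposition~\ref{proposition:4.1}), so closure of $\xi$ under weak isomorphisms forces either both or neither to be $\xi$-distinguished. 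Under this hypothesis the assertion ``$\E_\xi$ is an additive subfunctor of $\E$'' translates precisely into conditions (1) and (2) of Definition~\ref{definition:4.2}: $0\in\E_\xi$ is $\Delta_0\subseteq\xi$; stability of $\E_\xi$ under $a_{\ast}$ and $c^{\ast}$ is closure under cobase and base change; and closure under addition, via $\delta+\delta'=(\nabla_A)_{\ast}(\Delta_C)^{\ast}(\delta\oplus\delta')$ together with the decomposition of $\delta\oplus\delta'$ through the inclusions and projections of the biproduct, is closure under finite coproducts.

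For the direction ``$n$-proper $\Rightarrow$ $n$-exangulated'', the bridge already gives that $\E_\xi$ is an additive subfunctor and that $\s_\xi$ is a well-defined exact realization, with (R0)--(R2) inherited by restriction from $\s$ and (R2) using $\Delta_0\subseteq\xi$. Axiom (EA1) is immediate: the inflations and deflations of $(\C,\E_\xi,\s_\xi)$ are exactly the $\xi$-inflations and $\xi$-deflations, which are closed under composition by Corollary~\ref{corollary 4.4}. For (EA2) I would take the good lift $f^{\mr}$ furnished by (EA2) in the ambient $(\C,\E,\s)$; its mapping cone realizes $(d_X^0)_{\ast}\rho$, a cobase change of $\rho\in\E_\xi$, which therefore lies in $\E_\xi$ by closure under cobase change, so the mapping cone is a $\xi$-distinguished $n$-exangle. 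Axiom (EA2$^{\mathrm{op}}$) is dual, using closure under base change.

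For the converse, assume $(\C,\E_\xi,\s_\xi)$ is $n$-exangulated. Then $\E_\xi$ is an additive subfunctor and $\s_\xi$ is an exact realization, so the bridge yields conditions (1) and (2) of Definition~\ref{definition:4.2} at once, and closure of $\xi$ under weak isomorphisms is the standing hypothesis. It remains to establish condition (3), saturation, which is the main obstacle. Given the commutative square with $a,b,d$ deflations and $a,b$ being $\xi$-deflations, (EA1) for $(\C,\E_\xi,\s_\xi)$ shows $ba=dc$ is a $\xi$-deflation; realize it by a $\xi$-distinguished ${}_{P}\langle U^{\mr},\sigma\rangle_{D}$ with $d_U^n=dc$, so $U^n=A$. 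Realize the ambient deflation $d\colon C\to D$ by ${}_{Q}\langle V^{\mr},\tau\rangle_{D}$ with $d_V^n=d$, so $V^n=C$; our goal is $\tau\in\E_\xi$.

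The factorization $d_U^n=dc=d\circ c=d_V^n\circ c$, together with the common term $U^{n+1}=V^{n+1}=D$, is exactly the input for the deflation-end (dual) form of \cite[Proposition 3.5]{HLN}. I expect it to produce a morphism $t^{\mr}\colon\langle U^{\mr},\sigma\rangle\to\langle V^{\mr},\tau\rangle$ of distinguished $n$-exangles with $t^n=c$ and $t^{n+1}=1_D$, carrying the cobase relation $\tau=(t^0)_{\ast}\sigma$. Since $\sigma\in\E_\xi$ and $\xi$ is closed under cobase change (condition (2), already secured), it follows that $\tau\in\E_\xi$, so $d$ is a $\xi$-deflation and $\xi$ is saturated. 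The delicate point—and the crux of the whole proof—is to pin down this dual of Proposition~3.5 correctly: that for a morphism which is the identity on the deflation end, the extension of the \emph{target} is a cobase change of the extension of the \emph{source}. This is the deflation-end mirror of the base-change relation $\gamma=(s^{n+1})^{\ast}\tau$ exploited in the proof of Lemma~\ref{lemma:4.3}, and reversing every arrow in that argument should yield it.
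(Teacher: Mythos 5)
Your proposal is correct and takes essentially the same approach as the paper: the same weak-isomorphism bridge making $\mathfrak{s}_\xi$ well defined, the same identification of conditions (1)--(2) of Definition \ref{definition:4.2} with the additive-subfunctor property of $\mathbb{E}_\xi$, Corollary \ref{corollary 4.4} for (EA1), and a lifting argument via \cite[Proposition 3.5]{HLN} combined with (co)base-change closure for saturation. The only differences are cosmetic dualizations: you verify (EA2) directly from ambient good lifts where the paper invokes \cite[Claim 3.8]{HLN}, and you prove saturation directly in its deflation form using the dual of \cite[Proposition 3.5]{HLN} together with cobase change, whereas the paper proves the inflation form using \cite[Proposition 3.5]{HLN} together with base change and then converts via Lemma \ref{lemma:4.3}.
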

\begin{proof} ``$\Rightarrow$'' If $\delta\in\mathbb{E}_\xi(C,A)$ with $\mathfrak{s}_\xi(\delta)=[Y^{\mr}]$, then there is a distinguished $n$-exangle $_A\langle X^{\mr},\delta\rangle_C$ in $\xi$ by the definition of $\mathbb{E}_\xi(C,A)$. Note that $_A\langle Y^{\mr},\delta\rangle_C$ is a distinguished $n$-exangle. Then there is a morphism $f^{\mr}: {_A\langle X^{\mr},\delta\rangle_C}\to {_A\langle Y^{\mr},\delta\rangle_C}$ with $f^0=1_A$ and $f^{n+1}=1_C$. It is clear that $f^{\mr}$ is a weak isomorphism, hence $_A\langle Y^{\mr},\delta\rangle_C$ is a distinguished $n$-exangle in $\xi$.

Next we show that $\mathbb{E}_\xi$ is an additive sub-bifunctor of $\mathbb{E}$. For any $\delta, \rho\in \mathbb{E}_\xi(C,A)$, we have $\delta+\rho=\begin{bmatrix}1&1\end{bmatrix}_*\begin{bmatrix}1\\1\end{bmatrix}^*(\delta\oplus \rho)\in \mathbb{E}_\xi(C,A)$
because $\xi$ is closed under base change, cobase change and finite coproducts. It is easy to see that  $\mathbb{E}_\xi$ is an additive sub-bifunctor of $\mathbb{E}$. It follows from \cite[Claim 3.8]{HLN} and Corollary \ref{corollary 4.4} that $(\C, \mathbb{E}_\xi, \mathfrak{s}_\xi)$ is an $n$-exangulated category.

``$\Leftarrow$'' Note that $(\C, \mathbb{E}_\xi, \mathfrak{s}_\xi)$ is an $n$-exangulated category by hypothesis. It is easy to check that $\xi$ satisfies the conditions (1) and (2) in Definition \ref{definition:4.2}. Next we claim that $\xi$ is  saturated. Consider the following commutative diagram in $\C$
$$\xymatrix{
     A\ar[d]_{c} \ar[r]^{a} & B\ar[d]^{b} \\
  C \ar[r]^{d} & D,\\}
  $$
  where $a,b$ and $c$ are inflations. Assume that $a$ and $b$ are $\xi$-inflations. Then $f=ba$ is a $\xi$-inflation. Thus there is a $\xi$-distinguished $n$-exangle $_{A}{\langle Z^{\mr},\theta\rangle}_{M}$ with $d_{Z}^{0}=f$. Note that $c$ is an inflation. Then there is a distinguished $n$-exangle $_{A}{\langle X^{\mr},\rho\rangle}_{N}$ with $d_{X}^{0}=c$. By \cite[Proposition 3.5]{HLN}, we have the following commutative diagram $$\xymatrix{A\ar[r]^{c}\ar@{=}[d]&C\ar[r]^{d_{X}^{1}}\ar[d]^{d}&X^{2}\ar@{-->}[d]^{g^{2}}\ar[r]&\cdots \ar[r]& X^{n}\ar@{-->}[d]^{g^{n}}\ar[r]^{d_{X}^{n}}& N \ar@{-->}[d]^{g^{n+1}} \ar@{-->}[r]^{\rho}&\\
   A\ar[r]^{f}&D\ar[r]^{d_{Z}^{1}}&Z^{2}\ar[r]&\cdots \ar[r]& Z^{n}\ar[r]^{d_{Z}^{n}}& M \ar@{-->}[r]^{\theta}&.}$$
   Thus we have a morphism $g^{\mr}\colon{\langle X^{\mr},\rho\rangle}\to\langle Z^{\mr},\theta\rangle$ which satisfies $g^{0}=1_{A}$ and $g^{1}=d$ and $(g^{n+1})^{\ast}\theta=\rho$. Since $\xi$ is closed under base change by the proof above, $\langle X^{\mr},\rho\rangle$ is a $\xi$-distinguished $n$-exangle. So $\xi$ is saturated by Lemma \ref{lemma:4.3}, as desired.
\end{proof}

\begin{remark}\label{remark:4.6}
(1) Assume that $\xi$ is a class of distinguished $n$-exangles in $(\C,\E,\s)$ which is closed under weak isomorphisms. By Theorem \ref{thma} and \cite[Proposition 3.14]{HLN}, one can check that $\xi$ is an $n$-proper class if and only if $\mathbb{E}_\xi$ is a closed additive subfunctor of $\E$ defined by Herschend-Liu-Nakaoka in \cite[Definition 3.9 and Lemma 3.13]{HLN}.

(2) In Theorem \ref{thma}, when $n=1$, it is just the Theorem 3.2 in \cite{HZZ}. Note that, in \cite{HZZ}, one of the key arguments in the proof is that any extriangulated category has shifted octahedrons, while in our general context we do not have this fact and therefore must avoid this kind of arguments. So, the idea of proving Theorem \ref{thma} { is} different from the one in \cite{HZZ}.
\end{remark}

If we choose $(\C,\E,\s)$ to be an $n$-exact category or an $(n+2)$-angulated category, then we have the following corollary which is a consequence of  Theorem \ref{thma}.

\begin{corollary}\label{corollary:4.6} The following are true for any  $n$-exangulated category $(\C,\E,\s)$:

\begin{enumerate}
\item[\rm (1)] If $(\C,\E,\s)$ is an $n$-exact category and $\xi$ is a class of $n$-exact sequences which is closed under weak isomorphisms, then $\xi$ is an $n$-proper class if and only if $(\C, \mathbb{E}_\xi, \mathfrak{s}_\xi)$ is an $n$-exact category.

\item[\rm (2)] If $(\C,\E,\s)$ is an $(n+2)$-angulated category and $\xi$ is a class of $(n+2)$-angles which is closed under weak isomorphisms, then $\xi$ is an $n$-proper class if and only if $(\C, \mathbb{E}_\xi, \mathfrak{s}_\xi)$ is an $n$-exangulated category.
\end{enumerate}
\end{corollary}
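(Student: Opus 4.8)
The plan is to deduce both statements from Theorem \ref{thma}, which already delivers the equivalence ``$\xi$ is an $n$-proper class $\iff$ $(\C,\mathbb{E}_\xi,\mathfrak{s}_\xi)$ is an $n$-exangulated category'' for every class $\xi$ of distinguished $n$-exangles closed under weak isomorphisms. The first thing I would record is that the hypotheses of both parts feed directly into that theorem. By \cite[Proposition 4.34]{HLN} (resp. \cite[Proposition 4.5]{HLN}) an $n$-exact (resp. $(n+2)$-angulated) category carries a canonical $n$-exangulated structure under which the $n$-exact sequences (resp. the $(n+2)$-angles) are exactly the distinguished $n$-exangles. Hence a class $\xi$ of $n$-exact sequences (resp. $(n+2)$-angles) is literally a class of distinguished $n$-exangles, and the standing ``closed under weak isomorphisms'' hypothesis of Theorem \ref{thma} is precisely the one assumed here (with the paper's notion of weak isomorphism from Section \ref{section4}).

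Granting this identification, part (2) is immediate: Theorem \ref{thma} states that $\xi$ is $n$-proper if and only if $(\C,\mathbb{E}_\xi,\mathfrak{s}_\xi)$ is $n$-exangulated, which is exactly the asserted equivalence. One should not expect the stronger conclusion ``$(n+2)$-angulated'' on the right-hand side, just as a proper class of triangles in a triangulated category yields only an extriangulated, and not a triangulated, substructure.

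For part (1) the only additional work is to upgrade ``$n$-exangulated'' to ``$n$-exact'' in the conclusion. Here I would invoke the characterization that an $n$-exangulated category is (equivalent to) an $n$-exact one exactly when every inflation is a monomorphism and every deflation is an epimorphism. Since $(\C,\E,\s)$ is $n$-exact, all its $\s$-inflations are monic and all its $\s$-deflations are epic. A $\mathbb{E}_\xi$-inflation (resp. deflation) is, by construction, the corresponding end-map of some $\xi$-distinguished $n$-exangle, which is in particular an ordinary distinguished $n$-exangle; thus $\mathbb{E}_\xi$-inflations and $\mathbb{E}_\xi$-deflations form subclasses of the $\E$-inflations and $\E$-deflations and automatically inherit the monic/epic property. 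Consequently, whenever $(\C,\mathbb{E}_\xi,\mathfrak{s}_\xi)$ is $n$-exangulated it is in fact $n$-exact, while the reverse implication ``$n$-exact $\Rightarrow$ $n$-exangulated'' is again \cite[Proposition 4.34]{HLN}. Chaining these with Theorem \ref{thma} yields $\xi$ $n$-proper $\iff$ $(\C,\mathbb{E}_\xi,\mathfrak{s}_\xi)$ $n$-exangulated $\iff$ $(\C,\mathbb{E}_\xi,\mathfrak{s}_\xi)$ $n$-exact.

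The step I expect to carry the genuine content, as opposed to bookkeeping, is the monomorphism/epimorphism characterization of $n$-exact categories among $n$-exangulated ones. I would either cite it from \cite{HLN} or reprove the needed direction directly: assuming inflations are monic and deflations epic, one checks Jasso's axioms, the essential point being that each distinguished $n$-exangle then represents an honest $n$-exact (admissible) sequence whose end maps are a monomorphism and an epimorphism, the interior exactness already being furnished by condition (R1). Everything else is the routine transport of structure along the inclusion $\mathbb{E}_\xi \subseteq \E$.
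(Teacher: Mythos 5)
Your proposal is correct and matches the paper's own proof: both parts are deduced from Theorem \ref{thma}, with part (2) immediate and part (1) upgraded from ``$n$-exangulated'' to ``$n$-exact'' by observing that every $\xi$-inflation is monic and every $\xi$-deflation is epic (since $\xi$ consists of $n$-exact sequences) and then invoking the characterization of $n$-exact categories among $n$-exangulated ones from \cite{HLN}. The only cosmetic difference is that the paper cites the precise ingredients (\cite[Corollary 4.13, Propositions 4.23 and 4.37]{HLN} together with Proposition \ref{proposition:4.1}) where you cite the characterization wholesale.
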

\begin{proof} { (1) The ``if" part holds by Theorem \ref{thma}, so we will prove the ``only if" part.

Assume that $\xi$ is an $n$-proper class. It follows from Theorem \ref{thma} that $(\C, \mathbb{E}_\xi, \mathfrak{s}_\xi)$ is an $n$-exangulated category. Note that any distinguished $n$-exangle in $(\C, \mathbb{E}_\xi, \mathfrak{s}_\xi)$ is induced by $n$-exact sequences in $\xi$, hence any  $\xi$-inflation is monomorphic and any $\xi$-deflation is epimorphic in $\C$. It follows from \cite[Corollary 4.13, Proposition 4.23 and Proposition 4.37]{HLN} and Proposition \ref{proposition:4.1} that $(\C, \mathbb{E}_\xi, \mathfrak{s}_\xi)$ is an $n$-exact category.

(2) It is clear by Theorem \ref{thma}.}
\end{proof}

Let $\C$ be an additive category and $\H$  a subcategory of $\C$. Recall that a morphism $f:A\to B$ in $\C$ is called a \emph{left $\H$-approximation} of $A$
if $B\in{\H}$ and
$${\C}(f, H):{\C}(B,H)\to {\C}(A,H)$$ is an epimorphism for any $H\in{\mathscr{H}}$.
Moreover, if $(\C,\Sigma,\Theta)$ is an $(n+2)$-angulated category, then we say that a subcategory $\H$ of $\C$ is called
\emph{strongly covariantly finite} if for any object $B\in\C$, there exists an $(n+2)$-angle
$$B\xrightarrow{~f~}H^1\xrightarrow{}H^2\xrightarrow{}\cdots\xrightarrow{}H^{n-1}\xrightarrow{}H^{n}\xrightarrow{~~}C\xrightarrow{~~}\Sigma B$$
where $f$ is a left $\H$-approximation of $B$ and $H^1, H^2,\cdots, H^n\in\H$.

The following construction gives $n$-exangulated categories which are neither $n$-exact nor $(n+2)$-angulated.

\begin{proposition}\label{proposition:4.8} Let $(\C,\Sigma,\Theta)$ be an $(n+2)$-angulated category and  $\mathscr{H}$ a  full subcategory of $\C$. Denote by $\xi$ the class of $(n+2)$-angles $$X^0\xrightarrow{d_{X}^{0}}X^{1}\xrightarrow{d_{X}^{1}}X^{2}\xrightarrow{d_{X}^{2}}
\cdots\xrightarrow{d_{X}^{n-1}}X^{n}\xrightarrow{d_X^{n}}X^{n+1}\xrightarrow{~\delta~}\Sigma{X^{0}}$$
such that ${\C}(d_{X}^{0},H):{\C}(X^1,H)\to {\C}(X^{0},H)$ is an epimorphism for any $H\in{\mathscr{H}}$. Then the following statements hold.

\begin{enumerate}
\item[\rm (1)] $\xi$ is an $n$-proper class in $(\C,\Sigma,\Theta)$ which induces an $n$-exangulated category $(\C, \mathbb{E}_\xi, \mathfrak{s}_\xi)$.

\item[\rm (2)] If $(\C, \mathbb{E}_\xi, \mathfrak{s}_\xi)$ is an $n$-exact category, then $\xi$ is the class of split $(n+2)$-angles.

\item[\rm (3)] If $(\C, \mathbb{E}_\xi, \mathfrak{s}_\xi)$ is an $(n+2)$-angulated category, then $\Theta=\xi$.

\item[\rm (4)] Assume that { $\mathscr{H}$ is a strongly covariantly finite subcategory of $\C$} which is closed under direct summands. If $\{0\}\neq \mathscr{H}\subsetneqq \C$, then $(\C, \mathbb{E}_\xi, \mathfrak{s}_\xi)$ is neither $n$-exact nor $(n+2)$-angulated.
\end{enumerate}
\end{proposition}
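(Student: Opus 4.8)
The plan is to reformulate membership in $\xi$ as a vanishing condition on extensions, and then to verify the three axioms of Definition~\ref{definition:4.2} so that Theorem~\ref{thma} applies. View $(\C,\Sigma,\Theta)$ as $n$-exangulated with $\E(C,A)=\C(C,\Sigma A)$. If $\langle X^{\mr},\delta\rangle$ is a distinguished $(n+2)$-angle realizing $\delta\in\E(C,A)$ (so $X^0=A$, $X^{n+1}=C$), then condition (b) in the definition of an $n$-exangle, evaluated at $H\in\H$, gives the exact sequence
\[ \C(X^1,H)\xrightarrow{\C(d_X^0,H)}\C(X^0,H)\xrightarrow{~\del\ush_H~}\E(X^{n+1},H), \]
where $\del\ush_H(g)=g_{\ast}\delta$. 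Hence $\C(d_X^0,H)$ is epic for all $H\in\H$ if and only if $g_{\ast}\delta=0$ for every $H\in\H$ and every $g\in\C(A,H)$. So I would record that $\xi$ is described by
\[ \E_\xi(C,A)=\{\delta\in\E(C,A)\mid g_{\ast}\delta=0\ \text{for all}\ H\in\H\ \text{and}\ g\in\C(A,H)\}. \]

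With this description the bulk of the axioms becomes formal. For $c\co C'\to C$ one has $g_{\ast}(c\uas\delta)=c\uas(g_{\ast}\delta)=0$, and for $a\co A\to A'$ and $g'\co A'\to H$ one has $g'_{\ast}(a_{\ast}\delta)=(g'a)_{\ast}\delta=0$ since $g'a\in\C(A,H)$; thus $\E_\xi$ is an additive subbifunctor closed under base and cobase change, while closure under finite coproducts and $\Delta_0\subseteq\xi$ are immediate. Closure under weak isomorphisms is equally direct: if $(f^0)_{\ast}\delta=(f^{n+1})\uas\rho$ with $f^0,f^{n+1}$ invertible, then $g_{\ast}\rho=((f^{n+1})^{-1})\uas(gf^0)_{\ast}\delta=0$ for every $g\co Y^0\to H$. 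The one axiom that is not purely formal is saturation; here I would use Lemma~\ref{lemma:4.3} to replace it by its inflation version, together with the clean fact that a morphism $f$ is a $\xi$-inflation precisely when $\C(f,H)$ is epic for all $H\in\H$ (complete $f$ to an angle). Then, given a commutative square with $ba=dc$ and $a,b,c$ inflations and $a,b$ $\xi$-inflations, $\C(a,H)$ and $\C(b,H)$ are epic, so $\C(c,H)\circ\C(d,H)=\C(ba,H)$ is epic, forcing $\C(c,H)$ epic; hence $c$ is a $\xi$-inflation and $\xi$ is saturated. This makes $\xi$ an $n$-proper class, and Theorem~\ref{thma} gives part (1).

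For part (2), in an $n$-exact category every inflation is monic, so each $\xi$-inflation $d_X^0$ is a monomorphism in $\C$; rotating $\langle X^{\mr},\delta\rangle$ shows $d_X^0$ is preceded by $\pm\Sigma^{-1}\delta$ with zero composite, so $d_X^0$ monic forces $\delta=0$, i.e.\ the angle splits, giving $\xi=\Delta_0$. For part (3), I would first prove the general fact $\H\subseteq\I_\xi$: any $\xi$-angle with $X^0=H\in\H$ has $\C(d_X^0,H)$ epic, so $\id_H$ lifts to a retraction of $d_X^0$ and the angle splits, whence $\E_\xi(-,H)=0$. Since an $(n+2)$-angulated category has $\I_\xi=0$ by Remark~\ref{remark:2.12}(3), all objects of $\H$ are then zero, so the defining condition of $\xi$ is vacuous and $\Theta=\xi$.

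Finally, part (4) combines these. If $(\C,\mathbb{E}_\xi,\mathfrak{s}_\xi)$ were $n$-exact, then $\xi=\Delta_0$ by (2); but choosing $B\notin\H$ (possible as $\H\subsetneq\C$), strong covariant finiteness yields an $(n+2)$-angle $B\xrightarrow{f}H^1\to\cdots\to H^n\to C\to\Sigma B$ with $f$ a left $\H$-approximation and $H^1\in\H$, so $\C(f,H)$ is epic and the angle lies in $\xi$; were it split, $f$ would be a split monomorphism and $B$ a summand of $H^1\in\H$, hence $B\in\H$ by summand-closure, a contradiction. If $(\C,\mathbb{E}_\xi,\mathfrak{s}_\xi)$ were $(n+2)$-angulated, then as in (3) we would get $\H\subseteq\I_\xi=0$, contradicting $\{0\}\neq\H$. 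Thus it is neither. I expect the main obstacle to be the verification of the $n$-proper class axioms in (1)—above all saturation—for which the vanishing reformulation of $\E_\xi$ and Lemma~\ref{lemma:4.3} are the decisive tools; parts (2)–(4) are then comparatively short.
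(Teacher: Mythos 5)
Your proof is correct, and while it shares the paper's overall skeleton --- establish the axioms of Definition \ref{definition:4.2} and invoke Theorem \ref{thma}, with saturation handled exactly as in the paper (reduce to the inflation form via Lemma \ref{lemma:4.3}, then observe that $\C(c,H)\circ\C(d,H)=\C(ba,H)$ epic forces $\C(c,H)$ epic) --- several steps take a genuinely different route. In (1), where the paper verifies closure under weak isomorphisms, base change and cobase change by constructing explicit morphisms of $(n+2)$-angles (using Proposition \ref{proposition:4.1}, the completion axioms and long exact Hom-sequences), you instead reformulate membership in $\xi$ as the vanishing condition $g_{\ast}\delta=0$ for all $g\co A\to H$ with $H\in\H$, justified by exactness condition (b) in the definition of an $n$-exangle; this makes those closures, together with $\Delta_0\subseteq\xi$ and closure under finite coproducts, one-line functorial computations, and it makes transparent that membership in $\xi$ depends only on the extension $\delta$. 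In (2) you replace the paper's citations of Lemmas 2.3 and 2.4 of \cite{L} by a direct rotation argument ($d_X^0$ monic and $d_X^0\circ(\pm\Sigma^{-1}\delta)=0$ force $\delta=0$). In (3) the paper simply quotes Proposition 2.5 of \cite{GKO}, whereas you prove that $\H$ is contained in the class of injective objects of $(\C,\mathbb{E}_\xi,\mathfrak{s}_\xi)$ (every $\xi$-angle starting in $\H$ splits) and then apply Remark \ref{remark:2.12}(3) to force every object of $\H$ to be zero, making the defining condition of $\xi$ vacuous and hence $\xi=\Theta$; this is self-contained and sidesteps any question of whether the suspension of the new angulated structure agrees with $\Sigma$. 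In (4) the paper asserts, as ``easy to check,'' that the injectives of the new structure equal $\H$ and derives both contradictions from that; you bypass this by producing, for $B\notin\H$, a non-split $\xi$-angle from strong covariant finiteness and summand-closure (refuting $\xi=\Delta_0$ in the $n$-exact case), and by reusing the argument of (3) in the angulated case. The trade-off: the paper's proof stays close to the $(n+2)$-angulated axioms but outsources two steps to the literature, while yours is shorter, more functorial, and entirely internal to the paper's own lemmas.
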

\begin{proof} (1) First we show that the class of distinguished $n$-exangles induced by $\xi$ is closed under weak isomorphisms. Assume that $ X^{\mr}$ is an  $(n+2)$-angle in $\xi$ and $f^{\mr}: \langle X^{\mr},\delta\rangle\to \langle Y^{\mr},\rho\rangle$ is a weak isomorphism of distinguished $n$-exangles in $(\C, \mathbb{E}, \mathfrak{s})$ where $\langle X^{\mr},\delta\rangle$ and $ \langle Y^{\mr},\rho\rangle$ are induced by $(n+2)$-angles $X^{\mr}$ and $Y^{\mr}$, respectively. By the proof of Proposition \ref{proposition:4.1}, there is a morphism $g^{\mr}: \langle Y^{\mr}, \rho\rangle\to \langle X^{\mr},\delta\rangle$ of distinguished $n$-exangles such that $1_{Y^0}=f^0g^0$. For any morphism $\beta: Y^0\to H$ with $H\in \mathscr{H}$, there is a morphism $\gamma: X^1\to H$ such that $\beta f^0=\C(d_X^0, H)(\gamma)=\gamma d_X^0$ because $\C(d_{X}^{0}, H):\C(X^1, H)\to \C(X^0, H)$ is an epimorphism. Consider the exact sequence $$\C(Y^1, H)\xrightarrow{\C(d_Y^0, H)}\C(Y^0, H)\xrightarrow{\C(\Sigma^{-1}\rho, H)}\C(\Sigma^{-1}Y^{n+1}, H).$$ We obtain $\C(\Sigma^{-1}\rho, H)(\beta)=\beta\Sigma^{-1}\rho=\beta 1_{Y_0}\Sigma^{-1}\rho=\beta f^0g^0\Sigma^{-1}\rho=\gamma d_{X}^{0}g^0\Sigma^{-1}\rho=\gamma g_1d_{Y}^{0}\Sigma^{-1}\rho=0$. Hence $\C(d_{Y}^{0}, H):\C(Y^1, H)\to \C(Y^0, H)$ is an epimorphism, and $Y^{\mr}$ is an $(n+2)$-angle in $\xi$.

By Corollary \ref{corollary:4.6}(2), it suffices to show that $\xi$ is closed under base change and cobase change, and $\xi$ is saturated. Let $X^0\xrightarrow{d_{X}^{0}}X^{1}\xrightarrow{d_{X}^{1}}X^{2}\xrightarrow{d_{X}^{2}}\cdots\xrightarrow{d_{X}^{n-1}}X^{n}\xrightarrow{d_X^{n}}X^{n+1}\xrightarrow{\delta}\Sigma{X^{0}}$ be an $(n+2)$-angle in $\xi$. For any morphism $q:M\to X^{n+1}$, we have the following commutative diagram
$$\xymatrix{X^{0}\ar[r]^{d_{Y}^{0}}\ar@{=}[d]&Y^{1}\ar[r]^{d_{Y}^{1}}\ar@{-->}[d]^{f^{1}}&Y^{2}\ar@{-->}[d]^{f^{2}}\ar[r]&\cdots \ar[r]& Y^{n}\ar@{-->}[d]^{f^{n}}\ar[r]^{d_{Y}^{n}}& M \ar[d]^{q} \ar[r]^{{ \delta q}}&\Sigma X^{0}\ar@{=}[d]\\
   X^{0}\ar[r]^{d_{X}^{0}}&X^{1}\ar[r]^{d_{X}^{1}}&X^{2}\ar[r]&\cdots \ar[r]& X^{n}\ar[r]^{d_{X}^{n}}& X^{n+1}  \ar[r]^{\delta}&\Sigma X^{0}.}$$
   Let $H$ be any object in $\mathscr{H}$. Then we have the following commutative diagram
   $$\xymatrix@C=2cm@R1cm{
     {\C}(X^{1},H)\ar[d]^{\C(f^{1},H)} \ar[r]^{{\C}(d_{X}^{0},H)} & {\C}(X^{0},H)\ar@{=}[d] \\
  {\C}(Y^{1},H) \ar[r]^{{\C}(d_{Y}^{0},H)} & {\C}(X^{0},H).\\}
  $$
Note that ${\C}(d_{X}^{0},H):{\C}(X^{1},H)\to {\C}(X^{0},H)$ is an epimorphism. It follows that ${\C}(d_{Y}^{0},H):{\C}(Y^{1},H)\to {\C}(X^{0},H)$ is an epimorphism, which implies that $\xi$ is closed under base change. To prove that $\xi$ is closed under cobase change, we assume that $l:X^{0}\to N$ is a morphism in $\C$. Then we have the following commutative diagram
$$\xymatrix{X^{0}\ar[r]^{d_{X}^{0}}\ar[d]^{l}&X^{1}\ar[r]^{d_{X}^{1}}\ar@{-->}[d]^{g^{1}}&X^{2}\ar@{-->}[d]^{g^{2}}\ar[r]&\cdots \ar[r]& X^{n}\ar@{-->}[d]^{g^{n}}\ar[r]^{d_{X}^{n}}& X^{n+1} \ar@{=}[d] \ar[r]^{\delta }&\Sigma X^{0}\ar[d]^{\Sigma l}\\
  N\ar[r]^{d_{Z}^{0}}&Z^{1}\ar[r]^{d_{Z}^{1}}&Z^{2}\ar[r]&\cdots \ar[r]& Z^{n}\ar[r]^{d_{Z}^{n}}& X^{n+1}  \ar[r]^{(\Sigma l) \delta}&\Sigma N.}$$
   Let $H$ be any object in $\mathscr{H}$. Then we have the following commutative diagram with exact rows
   $$\xymatrix@C=3.4cm@R1cm{
     {\C}(Z^{1},H)\ar[d]^{{\C}(g^{1},H)} \ar[r]^{{\C}(d_{Z}^{0},H)} &  {\C}(N,H)\ar[d]^{{\C}(l,H)} \ar[r]^{{\C}((-1)^{n}(l\Sigma^{-1}\delta),H)\quad} & {\C}(\Sigma^{-1}X^{n+1},H)\ar@{=}[d] \\
 {\C}(X^{1},{ H}) \ar[r]^{{\C}(d_{X}^{0},H)} &  {\C}(X^{0},H) \ar[r]^{\C((-1)^{n}\Sigma^{-1}\delta,H)\quad} & {\C}(\Sigma^{-1}X^{n+1},H).\\}
  $$
 Since ${\C}(d_{X}^{0},H):{\C}(X^{1},H)\to {\C}(X^{0},H)$ is an epimorphism, it follows that ${\C((-1)^{n}\Sigma^{-1}\delta,H)}=0.$
 Thus ${{\C}((-1)^{n}(l\Sigma^{-1}\delta),H)}=0$, and hence ${\C}(d_{Z}^{0},H):{\C}(Z^{1},H)\to {\C}(N,H)$ is an epimorphism, as desired.

  Finally, for any commutative diagram in $\C$
$$\xymatrix{
     A\ar[d]_{c} \ar[r]^{a} & B\ar[d]^{b} \\
  C \ar[r]^{d} & D,\\}
  $$
  where $a,b$ and $c$ are inflations in $(\C,\Sigma,\Theta)$,  we have the following commutative diagram
  $$\xymatrix@C=2cm@R1cm{
    {\C}(D,H)\ar[d]_{{\C}(b,H)} \ar[r]^{{\C}(d,H)} & {\C}(c,H)\ar[d]^{{\C}(c,H)} \\
  {\C}(B,H) \ar[r]^{{\C}(a,H)} & {\C}(A,H),\\}
  $$
  where $H$ is any object in $\mathscr{H}$. Let $a$ and $b$ be $\xi$-inflations. Then ${{\C}(a,H)}$ and ${{\C}(b,H)}$ are epimorphisms. Thus ${{\C}(c,H)}{{\C}(d,H)}$ is an epimorphism, and hence ${{\C}(c,H)}$ is an epimorphism. So $\xi$ is saturated by Lemma \ref{lemma:4.3}.

  (2) Assume that $(\C, \mathbb{E}_\xi, \mathfrak{s}_\xi)$ is an $n$-exact category. Then for any { $(n+2)$-angle} $$X^0\xrightarrow{d_{X}^{0}}X^{1}\xrightarrow{d_{X}^{1}}X^{2}\xrightarrow{d_{X}^{2}}
  \cdots\xrightarrow{d_{X}^{n-1}}X^{n}\xrightarrow{d_X^{n}}X^{n+1}\xrightarrow{~\delta~}\Sigma{X^{0}}$$ in $\xi,$  one has $X^0\xrightarrow{d_{X}^{0}}X^{1}\xrightarrow{d_{X}^{1}}X^{2}\xrightarrow{d_{X}^{2}}\cdots\xrightarrow{d_{X}^{n-1}}X^{n}\xrightarrow{d_X^{n}}X^{n+1}$ is an $n$-exact sequence in $\C$. Thus $d_{X}^{0}$ is a split monomorphism by \cite[Lemma 2.4]{L}, and hence $\delta=0$ by \cite[Lemma 2.3]{L}. So $\xi$ is the class of split $(n+2)$-angles.

  (3) The result holds by \cite[Proposition 2.5]{GKO}.

  (4)  Assume that $\mathscr{H}$ is a strongly covariantly finite subcategory of $(\C,\Sigma,\Theta)$ which is closed under direct summands. It is easy to check that the class of  injective objects in $(\C, \mathbb{E}_\xi, \mathfrak{s}_\xi)$ equals to $\mathscr{H}$. If $(\C, \mathbb{E}_\xi, \mathfrak{s}_\xi)$ is an $n$-exact category, then $\xi$ is the class of split $(n+2)$-angles by (2). Thus $\mathscr{H}=\C$, a contradiction. On the other hand, if $(\C, \mathbb{E}_\xi, \mathfrak{s}_\xi)$ is an $(n+2)$-angulated category, then $\mathscr{H}$ consists of zero objects by { Remark \ref{remark:2.12}(3)}. This yields a contradiction.
\end{proof}

Now we give a concrete example to explain our main result in this section.

\begin{example}\label{example:4.8}
This example comes from \cite{L2}.
Let $\T=D^b(kQ)/\tau^{-1}[1]$ be the cluster category of type $A_3$,
where { $k$ is an algebraically closed field,} $Q$ is the quiver $1\xrightarrow{~\alpha~}2\xrightarrow{~\beta~}3$,
$D^b(kQ)$ is the bounded derived category of finitely generated modules over $kQ$, $\tau$ is the Auslander-Reiten translation and $[1]$ is the shift functor
of $D^b(kQ)$. Then $\T$ is a $2$-Calabi-Yau triangulated category. Its shift functor is also denoted by $[1]$.

We describe the
Auslander-Reiten quiver of $\T$ in the following:
$$\xymatrix@C=0.6cm@R0.3cm{
&&P_1\ar[dr]
&&S_3[1]\ar[dr]
&&\\
&P_2 \ar@{.}[rr] \ar[dr] \ar[ur]
&&I_2 \ar@{.}[rr] \ar[dr] \ar[ur]
&&P_2[1]\ar[dr]\\
S_3\ar[ur]\ar@{.}[rr]&&S_2\ar[ur]\ar@{.}[rr]
&&S_1\ar[ur]\ar@{.}[rr]&&P_1[1]
}
$$
It is straightforward to verify that $\C:=\add(S_3\oplus P_1\oplus S_1)$ is a $2$-cluster tilting subcategory of $\T$. Moreover, $\C[2]=\C$.  By \cite[Theorem 1]{GKO}, we know that $(\C,[2])$ is a $4$-angulated category.
Let $\mathscr{H}=\add(S_3\oplus S_1)$.
Then the $4$-angle
$$P_1\xrightarrow{~~}S_1\xrightarrow{~~}S_3\xrightarrow{~~}P_1\xrightarrow{~~}P_1[2]$$
shows that $\mathscr{H}$ is a strongly covariantly finite subcategory of $\C$. Note that $\{0\}\neq \mathscr{H}\subsetneqq \C$. So $(\C, \mathbb{E}_\xi, \mathfrak{s}_\xi)$ is neither $n$-exact nor $(n+2)$-angulated by Proposition \ref{proposition:4.8}.
\end{example}

\section*{\bf Acknowledgments}

The authors would like to thank the referee for reading the paper carefully and for many suggestions on mathematics and English expressions.

\textbf{Jiangsheng Hu}\\
School of Mathematics and Physics, Jiangsu University of Technology,
 Changzhou, Jiangsu 213001, P. R. China.\\
E-mail: \textsf{jiangshenghu@jsut.edu.cn}\\[1mm]
\textbf{Dongdong Zhang}\\
Department of Mathematics, Zhejiang Normal University,
 Jinhua, Zhejiang 321004, P. R. China.\\
E-mail: \textsf{zdd@zjnu.cn}\\[1mm]
\textbf{Panyue Zhou}\\
College of Mathematics, Hunan Institute of Science and Technology, 414006, Yueyang, Hunan, P. R. China.\\
E-mail: \textsf{panyuezhou@163.com}

\end{document}